      \newtheorem{thm}{Theorem}[section]
      \newcommand{\bthm}{\begin{thm}} \newcommand{\ethm}{\end{thm}}
      \newtheorem{prop}[thm]{Proposition}
      \newcommand{\bprp}{\begin{prop}} \newcommand{\eprp}{\end{prop}}
      \newtheorem{prob}[thm]{Problem}
      \newcommand{\bprb}{\begin{prob}} \newcommand{\eprb}{\end{prob}}
      \newtheorem{lem}[thm]{Lemma}
      \newcommand{\blem}{\begin{lem}} \newcommand{\elem}{\end{lem}}
      \newtheorem{cor}[thm]{Corollary}
      \newcommand{\bcor}{\begin{cor}} \newcommand{\ecor}{\end{cor}}
      \newtheorem{conj}[thm]{Conjecture}
      \newcommand{\bcnj}{\begin{conj}} \newcommand{\ecnj}{\end{conj}}
      \theoremstyle{definition}
      \newtheorem{defn}[thm]{Definition}
      \newcommand{\bdfn}{\begin{defn}} \newcommand{\edfn}{\end{defn}}
      \theoremstyle{remark}
      \newtheorem{rem}[thm]{Remark}
      \newcommand{\brem}{\begin{rem}} \newcommand{\erem}{\end{rem}}
      \newtheorem{cnv}[thm]{Convention}
      \newcommand{\bcnv}{\begin{cnv}} \newcommand{\ecnv}{\end{cnv}}
      \newtheorem{exam}[thm]{Example}
      \newcommand{\bexm}{\begin{exam}} \newcommand{\eexm}{\end{exam}}
      \newcommand{\bpf}{\begin{proof}} \newcommand{\epf}{\end{proof}}
      \newtheorem{exer}[thm]{Exercise}
      \newcommand{\bexer}{\begin{exer}} \newcommand{\eexer}{\end{exer}}
      \newcommand{\ben}{\begin{enumerate}}
      \newcommand{\een}{\end{enumerate}}
      \newcommand{\bit}{\begin{itemize}}
      \newcommand{\eit}{\end{itemize}}
    \newcommand{\odeg}[1]{\mathrm{deg}({#1}) }
    \newcommand{\reals}{{\mathbb R}}
    \newcommand{\Z}{{\mathbb Z}}
    \newcommand{\N}{{\mathbb N}}
    \newcommand{\pr}{\mathbb{P}}
    \newcommand{\E}{\mathbb{E}}
    \newcommand{\PP}{\mathbb{P}}
    \newcommand{\om}{\omega}
    \newcommand{\Om}{\Omega}
    \newcommand{\quP}{P}
    \newcommand{\anP}{\mathbb P}
    \newcommand{\ignore}[1]{}
    \newcommand{\one}{\mathrm{1}}
    \newcommand{\cG}{\mathcal{G}}
    \newcommand{\cH}{\mathcal{H}}
    \newcommand{\dive}{\mathrm{div}}
    \newcommand{\al}{\alpha}
    \newcommand{\R}{\mathbb{R}}
    \newcommand{\RR}{\mathcal{R}}
\title{Random walks in random hypergeometric environment} 
\author{%
Tal~Orenshtein\footnote{Technische Universit\"at Berlin and Weierstrass Institute, Germany.
{\small orenshtein@wias-berlin.de,}
{\small https://sites.google.com/site/talorenshtein314159/}
}
\and
Christophe~Sabot\footnote{Institut Camille Jordan, Universit\'e Lyon 1, France.
{\small sabot@math.univ-lyon1.fr,}
{\small http://math.univ-lyon1.fr/homes-www/sabot}.
}
}
\date{}
\begin{document}


\maketitle

\begin{abstract}We consider one-dependent random walks on $\Z^d$ in random hypergeometric environment for $d\ge 3$.
    These are memory-one walks in a large class of environments parameterized by positive weights on directed edges and on pairs
    of directed edges which includes the class of Dirichlet environments as a special case.
    We show that the walk is a.s.\ transient for any choice of the parameters, and moreover that the return time has some finite positive moment.
    We then give a characterization for the existence of an invariant measure for the process from the point of
    view of the walker which is absolutely continuous with respect to the initial distribution on the environment in terms
    of a function $\kappa$ of the initial weights.
    These results generalize \cite{sabot2011transience} and \cite{sabot2013particle} on random walks in Dirichlet environment.
    It turns out that $\kappa$ coincides with the corresponding parameter in the Dirichlet case, and so in particular
    the existence of such invariant measures is independent of the weights on pairs of directed edges, and determined solely
    by the weights on directed edges.
\end{abstract}

\section{Introduction}
Despite important progress in the ballistic, balanced, or perturbative regimes (see in particular
\cite{Sznitman-Zerner-99,Sznitman-JEMS-00,Sznitman-PTRF-02,Sznitman-Zeitouni-2006,Bolthausen-Zeitouni-07,Berger-Ramirez-14,Rassoul-Seppalainen-09,Berger-Zeitouni-08,lawler1982weak,guo2012quenched,berger2014quenched}), random walks in i.i.d.\ random environment in dimension $d\ge 2$ remain
a very challenging model. The high non-reversibility of this model is at the heart of the difficulty and several of the basic questions concerning recurrence/transience,
equivalence between directional transience and ballisticity, and diffusive behavior are still unsolved.
The process viewed from the particle, which is a key
tool for reversible models, is still only understood under specific conditions (see \cite{sabot2013particle,Rassoul-Agha-03,Berger-Rosenthal-16}).

The special case of random walks in random Dirichlet environment (RWDE), \cite{Enriquez-Sabot-06}, where the environment is i.i.d.\ at each site
and distributed according to a Dirichlet law, shows remarkable simplifications, while keeping the main phenomenological behavior as the
general model (see \cite{sabot2016survey} for a survey).
For this special choice of distribution, a key property of ``statistical invariance by time reversing'' makes it possible to prove transience
in dimension $d\ge 3$ \cite{sabot2011transience},
existence of an invariant measure viewed from the particle absolutely continuous with respect to the static law,
and equivalence between directional transience and ballisticity in dimension $d\ge 3$ \cite{Sabot-Tournier-11,sabot2013particle,Bouchet-13,sabot2016survey}.

The aim of this paper is to give a generalization of this model and of these results to a class of one-dependent random walks in random environment,
based on some hypergeometric distributions.
The hypergeometric functions defined in \eqref{eq:hypergeometric function} below are a natural special functions constructed from the
Dirichlet distributions.
A generalization of the statistical time-reversal key property is proved (see Corollary \ref{cor:time reversal environment} below),
based on a duality property of these hypergeometric functions.
The latter is a multidimensional generalization of the fact that $\,_2F_1(a,b;c,z)=\,_2F_1(b,a;c,z)$ where $\,_2F_1$ is the basic hypergeometric series
(see e.g.\ \cite{aomoto2011theory}, Section~1.2.1 for the definition and Section~1.3.1 for the integral representation).

This generalization is natural from the following considerations. The statistical time-reversal property mentioned above makes
it possible to write a rather efficient proof of transience and existence of an absolutely continuous invariant measure viewed from the particle in dimension $d\ge 3$, but it fails to give information on some
other natural questions on random walks in random Dirichlet environment (RWDE), such as large deviation and Sznitman's $(T)$ condition.
Nevertheless, in dimension 1 in the Dirichlet case, the large deviation rate function can be
explicitly computed and involves some hypergeometric functions (see \cite{sabot2016survey}, section 8).
The meaning of this computation remains still rather mysterious and the model investigated in this paper comes from
an attempt to generalize the computation done in \cite{sabot2016survey}. Besides, it is also natural to ask to what
extent the strategy used for Dirichlet environments can be generalized. We believe that the class of Dirichlet environments
is the only class of i.i.d.\ environments on which the random walk satisfies the statistical time-reversal property mentioned above.
This paper shows nevertheless that a larger class of environments for \emph{one-dependent} random walks share the same basic features as the Dirichlet environments.








\section{Statement of the results}
\subsection{Hypergeometric functions}\label{subsec:hypergeometric}

Denote by $\Delta^{(n)}:=\left\{u\in(0,1]^n:\displaystyle\sum_{i=1}^n u_i =1\right\}$ the open $n$-simplex.
Define a function on vectors $u\in\Delta^{(n)}$
\begin{equation}\label{eq:hypergeometric density}
\varphi(\alpha,\beta;Z;u) = \left( \displaystyle\prod_{i=1}^n u_i^{\alpha_i - 1} \right) \displaystyle\prod_{j=1}^l  \left( (Z\cdot u)_j ^{-\beta_j} \right)
\end{equation}
where as parameters we take vectors $\alpha\in (\reals_+^*)^n$ and $\beta \in (\reals_+^*)^l$ that satisfy
$\sum_i \alpha_i = \sum_j \beta_j$ and have strictly positive coordinates, and $l\times n$ matrix $Z=(Z_{j,i})$
with strictly positive coefficients, where here and after we use the notation $\reals_+^*=\{t\in\reals:t>0\}$.
Call functions of the following form \emph{hypergeometric functions}:
\begin{equation}\label{eq:hypergeometric function}
\Phi(\alpha,\beta;Z) := \int_{\Delta^{(n)}} \varphi (\alpha,\beta;Z;u) du.
\end{equation}
Here the integral is computed according to the Lebesgue measure on the simplex $du=du_1\cdots du_{n-1}$ so that $u_n=1-\displaystyle\sum_{i=1}^{n-1} u_i$.
When $(Z_{j,i})$ has strictly positive coefficients, we have for all $(Z\cdot u)_j\ge \underline z$, with $\underline z= \min_{i,j}(Z_{j,i})$, so that the integral \eqref{eq:hypergeometric function} is finite.
These functions are classical generalized hypergeometric functions, see e.g. \cite[Section 3.7.4.]{aomoto2011theory}.

\subsection{The model on $\Z^d$}\label{model-Zd}
We denote by $(e_1, \ldots, e_d)$ the canonical base of $\reals^d$, and we set $e_{d+i}=-e_i$ for $i=1, \ldots, d$.
Consider the lattice $\Z^d$ endowed with its natural {\bf directed} graph structure: $\cG_{\Z^d}=(\Z^d, E_{\Z^d})$, where $E=\{(x,x+e_i), \; x\in \Z^d, \; i=1, \ldots, 2d\}$.
The arc graph is the directed graph $\cH_{\Z^d}=(E_{\Z^d}, K_{\Z^d})$, with $K=K_{\Z^d}\subset E_{\Z^d}\times E_{\Z^d}$ given by
$$
K=\big\{\big((x-e_i,x),(x,x+e_j)\big),\; x\in \Z^d, \; i,j=1, \ldots, 2d\big\}.
$$
Concretely, $K$ is the set of couples of succeeding edges that can be crossed by a random walker on the graph $\cG_{\Z^d}$.
The space $\Om_K\subset (0,1]^K$ of random environments on $\cH_{\Z^d}$ is the subspace of transition probabilities of nearest neighbor chains on $\cH_{\Z^d}$ :
$$
\Om_K=\Big\{(\om_{e,e'})_{(e,e')\in K}\in (0,1]^K, \hbox{ such that } \forall e\in E, \; \sum_{e', \; (e,e')\in K} \om_{e,e'}=1\Big\}.
$$
The space $\Om_K$ also naturally describes the space of one-dependent Markov chain kernels on the graph ${\Z^d}$.

Let us now define the random environment. Fix some positive parameters $(\alpha_1, \ldots, \alpha_{2d})$ and a $2d\times 2d$ matrix $Z=(Z_{i,j})$ with strictly positive coefficients. \newline The vectors $(u_{(x,x+e_i)})_{i=1, \ldots, 2d}$, $x\in V$,
are chosen randomly and independently according to the same distribution on the simplex $\Delta^{(2d)}$
with density
\begin{equation}\label{eq:def of environment on Zd}
\frac{1}{\Phi(\alpha,\alpha;Z)} \varphi (\alpha,\alpha; Z ; u) \text{d} u.
\end{equation}
This defines a product law on $(u_{(x,x+e_i)})_{x\in \Z^d, \; i=1, \ldots, 2d}$ which is denote by $\anP^{(\alpha, Z)}$.
Denote by $\E^{(\alpha, Z)}$ the corresponding expectation.
We now define a random environment on $K_{\Z^d}$ by first sampling $(u_{(x,x+e_i)})_{x\in \Z^d, \; i=1, \ldots, 2d}$ according to the last product law and the letting
\begin{equation}\label{eq:def of dpepndent environment on Zd}
\om_{(x-e_i,x),(x,x+e_j)}=\frac{{Z_{i,j} u_{x,x+e_i}}}{\sum_{l=1}^{2d} Z_{i,l} u_{x,x+e_l}}, \,\,\, x\in \Z^d,\, i,j=1,..., 2d.
\end{equation}
Naturally, $\om$ defines the transition probabilities of a Markov chain on the arc graph $\cH_{\Z^d}$, i.e. $w\in \Om_K$, and the distribution $\anP^{(\alpha, Z)}$ induces a probability distribution on the set of environments $\Om_K$.

For an environment $\om$ we denote by $\quP_{e,\om}$ the law of the Markov chain $(X_n)_{n\in \N}$ on state space $E$
started at $e\in E$ with step distribution $\om$.
Whenever $\om$ is sampled according to $\pr^{(\alpha,Z)}$, we say that the last Markov chain is distributed
according to the \emph{quenched law}. Denote by $\anP_{e}^{(\alpha,Z)}$ the marginal of the joint law of the
Markov chain started at $e$ and the environment distributed according to $\pr^{(\alpha,Z)}$. The latter is also called the \emph{averaged law}, or the \emph{annealed law}, of the walk $X$, and it is characterized by
$$\anP_{e}^{(\alpha,Z)} (\cdot) = \int \quP_{e,\om} (\cdot) \text{d} \anP^{(\alpha,Z)}(\om).$$

Remark that from \eqref{eq:def of environment on Zd}, whenever $Z_{i,j}=Z_{i,1}$ for all $i,j= 1, \ldots, 2d$,
then we have $\om_{(x-e_i,x),(x,x+e_j)}=u_{x,x+e_j}$. Therefore, it defines a Markov chain on the original graph
$\cG_{\Z^d}$, and moreover $(u_{x,x+e_i})_{i=1, \ldots, 2d}$ are independent and follow a Dirichlet distribution
with parameters $(\alpha_1, \ldots, \alpha_{2d})$ at each site. Hence, it corresponds to RWDE mentioned in the introduction (for an overview on RWDE see \cite{sabot2016survey}).

\subsection{Order of Green function and Transience on $\Z^d$, $d\ge 3$.}
Fix parameters $(\alpha_i)_{i=1, \ldots, 2d}$ and $(Z_{i,j})_{i,j=1, \ldots, 2d}$ as in Section \ref{model-Zd} and let $\om$ be distributed according to $\anP^{(\alpha,Z)}$.
Denote by $G_\om(e_0,e_0)$ the Green function at $(e_0,e_0)$ of the Markov chain with jump probabilities $\om$,
that is, the $\quP_{e_0,\om}$-expected number of returns to $e_0$.

\begin{thm}\label{thm:green moment} Let $\alpha$ and $Z$ be as in Section \ref{model-Zd} and $d\ge 3$.
Let $\tilde{\kappa}:=\min_{i=1,...,2d}\{\al_{e_i}\}$.
If $s<\tilde{\kappa}$, then $$\E_{e_0}^{(\al,Z)} [G_\om(e_0,e_0)^s]<\infty.$$
In particular, $\om$-$\anP^{(\alpha, Z)}$ almost surely, $(X_n)$ is transient under the quenched law $P_{e_0,\om}$.
\end{thm}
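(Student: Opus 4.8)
The plan is to follow the strategy of \cite{sabot2011transience} for Dirichlet environments, replacing the Dirichlet-specific computations by the hypergeometric machinery developed earlier, in particular the statistical time-reversal property (Corollary \ref{cor:time reversal environment}). The key object is the Green function $G_\om(e_0,e_0)$, and the goal is to bound its $s$-th moment for $s<\tilde\kappa$. First I would reduce the problem to a finite box: introduce, for a large finite subgraph $T_N$ of the arc graph containing $e_0$, the restricted Green function $G_\om^{T_N}(e_0,e_0)$ with killing at the boundary, observe that $G_\om^{T_N}\uparrow G_\om$ as $N\to\infty$, and by monotone convergence it suffices to bound $\E_{e_0}^{(\al,Z)}[(G_\om^{T_N}(e_0,e_0))^s]$ uniformly in $N$. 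On the finite graph, $G_\om^{T_N}(e_0,e_0) = 1/(1-p_N)$ where $p_N$ is the return probability to $e_0$ before exiting $T_N$; equivalently $G^{T_N}_\om(e_0,e_0)$ is comparable to $1/q_N$ with $q_N = \quP_{e_0,\om}(\text{exit }T_N\text{ before returning to }e_0)$, so the task becomes showing $\E[q_N^{-s}]$ is bounded.

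The heart of the argument is to control $q_N$ from below by the conductance, or flow, across a cut, using the max-flow/min-cut duality together with the structure of the weights $\om_{e,e'} = Z_{i,j}u_{x,x+e_i}/\sum_l Z_{i,l}u_{x,x+e_l}$. Here is where the hypergeometric structure enters: after the time-reversal identity of Corollary \ref{cor:time reversal environment}, the law of the environment on a finite piece is again (up to a tractable Radon--Nikodym factor involving $\Phi$) of hypergeometric type, and the relevant exit quantity factorizes through the vertex vectors $u_{x,\cdot}$, whose marginals are governed by exponents $\alpha_i$. The moment bound $\E[q_N^{-s}]<\infty$ for $s<\tilde\kappa=\min_i\alpha_{e_i}$ then follows from a local integrability estimate: near the facet $\{u_{x,x+e_i}=0\}$ the density $\varphi(\al,\al;Z;u)$ behaves like $u_{x,x+e_i}^{\alpha_i-1}$ (the matrix factors $(Z\cdot u)_j^{-\alpha_j}$ being bounded above and below on the simplex since $Z$ has strictly positive entries, as noted after \eqref{eq:hypergeometric function}), so an inverse power $u_{x,x+e_i}^{-s}$ remains integrable precisely when $s<\alpha_i$. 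Summing/choosing the cut appropriately, and using that one can always route the walk out of the box through edges at a single carefully chosen vertex, gives the uniform bound; the one-dependence of the environment is harmless because the relevant events localize to finitely many vertices whose $u$-vectors are still independent.

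The main obstacle I anticipate is making the reduction ``$q_N^{-1}$ is controlled by a bounded power of finitely many $u_{x,x+e_i}^{-1}$'' rigorous and uniform in $N$: in \cite{sabot2011transience} this uses a clever choice of flow and the statistical invariance by time reversal to relocate the singularity to a fixed number of edges incident to the origin, and one must check that the hypergeometric generalization of the time-reversal (Corollary \ref{cor:time reversal environment}) is strong enough — i.e.\ that the Radon--Nikodym cocycle it produces, which now involves ratios of $\Phi$-values rather than Gamma functions, is bounded on the relevant finite sub-environments so that it does not spoil the moment estimate. Once that cocycle boundedness is in hand, the final integrability computation is the routine facet estimate described above, and transience follows: $G_\om(e_0,e_0)<\infty$ $\anP^{(\al,Z)}$-a.s.\ since it has a finite $s$-th moment, hence the walk $(X_n)$ is $\quP_{e_0,\om}$-transient for $\anP^{(\al,Z)}$-almost every $\om$, which by the standard zero--one/irreducibility argument on the arc graph transfers to transience of the projected walk on $\Z^d$.
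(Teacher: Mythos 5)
Your overall plan matches the paper's proof: reduce to a killed Green function on a finite box, bound $G^N_\om(e_0,e_0)$ by the reciprocal of the probability of escaping before returning to $e_0$, invoke the hypergeometric time-reversal invariance to compute a negative moment of that escape probability via a Beta-like marginal formula \eqref{eq:moment of om - Beta-like}, use a Radon--Nikodym change of parameters and H\"older's inequality, and bound the resulting $F$-ratio by a Taylor argument together with a bounded-$L^2$ flow. The strategy and the source of the constraint $s<\tilde\kappa$ (integrability of $u^{-s}$ against the density near a facet) are both correctly identified.

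There are, however, two concrete technical points where your sketch leaves a genuine gap. First, you do not address the fact that Corollary~\ref{cor:time reversal environment} requires $\dive(\alpha)=0$ \emph{on the finite graph}, which fails once $\Z^d$ is truncated: the box has a boundary that creates non-zero divergence. The paper repairs this by adding an auxiliary edge $(\partial,x_0)$ carrying weight $\gamma$, introducing a unit flow $\theta$ from $0$ to $\partial$, and tilting to the weights $\alpha+\gamma\theta$, which \emph{do} have null divergence; the time reversal is then applied under $\pr^{(\alpha+\gamma\theta,Z)}$, and one returns to $\pr^{(\alpha,Z)}$ by the Radon--Nikodym identity~\eqref{eq:radon-Nikodym om plus theta}. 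Your description of ``relocating the singularity by a flow'' is pointing at something else (a cut argument), and without the divergence-free tilting the time-reversal step does not apply. Second, you suggest the ``Radon--Nikodym cocycle\dots is bounded on the relevant finite sub-environments''; in fact the $F$-ratio $F(\alpha+\gamma\theta,Z)^{1/r}F(\alpha+\tfrac{1}{1-r}\gamma\theta,Z)^{1-1/r}/F(\alpha,Z)$ is not bounded in any elementary way---it is controlled only after writing it as $\exp(\sum_x \nu(\cdot,\gamma\theta_x,\cdot))$ and using a second-order Taylor expansion of $\log\Phi$ (first-order terms cancel because $\nu=0$ and $\nabla\nu=0$ at $\theta_x=0$), which yields a bound $\exp(C_r\|\theta\|_2^2)$. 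This is precisely where $d\ge 3$ enters: one needs a flow whose $L^2$ energy is uniformly bounded in $N$, i.e.\ the electrical resistance $R(0,\infty)$ on $\Z^d$ must be finite. Your sketch does not make clear where transience of the lattice itself is used, which is essential.
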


\begin{rem}
A similar statement was proved in \cite[Theorem 1]{sabot2011transience} in the Dirichlet case for $s<\kappa$,
where $\kappa = \max\{2(\sum_{j=1}^{d} \al_{e_j}) - (\alpha_{e_i}-\alpha_{-e_i}):{1\le i \le d}\}$ (an
interpretation of the parameter $\kappa$ is given at the end of Section \ref{sec_stat_invariant}). Hence, the
last theorem generalizes this to the hypergeometric environment in the case $s<\tilde{\kappa}<\kappa$.
The statement would certainly be also true in the case $\tilde{\kappa}\le s <\kappa$: to prove it in this regime, one would need to consider a max-flow type problem adapted to the arc graph $\cH$, as in Section 7.2.\ of \cite{sabot2016survey}
together with our proof of Theorem \ref{thm:point of view}. We don't include that analysis in the current paper, but we stress that it could be done using the same techniques.
\end{rem}
%


\begin{rem} As in the standard Dirichlet case, the case of dimension 2 is still mysterious. It is expected that the walk is recurrent when the weights are symmetric with respect to the axis (i.e. null expected drift at first step), hence the the Green function is a.s. infinite. When the weights are not symmetric, we would expect that there is no long range trapping effect in $d=2$ so that the integrability condition would be the same as in $d\ge 3$. But it is still far from being understood.
In dimension $d=1$, it would be possible to adapt the proof of the Diriclet case (see \cite{sabot2016survey} page 502) to compute the law of the probability starting from the edge $(0,1)$ to never come back to the edge $(0,1)$. It would give that the Green function is integrable for $s<\vert \alpha-\beta\vert$ when $\alpha$ (resp.\ $\beta$) are the weights of the right direction edge (resp.\ left direction edge). The integrability should not depend on the $Z$ parameters. When $\alpha=\beta$ the walk should be recurrent.
\end{rem}

\subsection{Invariant measure for the walker point of view}\label{sec:point of view of the walker}\label{sec_stat_invariant}
Let $(\tau_x)_{x\in\Z^d}$ be the shift maps on $\Om_K$, where $\tau_x(\om(e,e')):=\om(x+e,x+e')$. Here $x+e:=(x+\underline{e}, x+\overline{e}')$ for $x\in\Z^d$ and
$e=(\underline{e},\overline{e})\in E_{\Z^d}$.
  We also let $\tau_e:=\tau_{\overline{e}}$.
 Following the strategy of \cite{kozlov1985method} and \cite{kipnis1986central},
 we define the process
  $$\overline{\om}_n:=\tau_{X_n}(\overline{\om}_0)\, \text{ on } \Om_K$$
  from the point of view of the walker with initial state $\overline{\om}_0 \sim \PP$.
  Under $\PP_{e_0}$, this is a Markov process on $\Om_K$.
Its infinitesimal generator $\RR$ is given by
$$\RR (f)(\om):= \sum_{i=1}^{2d}\om(e_0,e_i) f(\tau_{e_i}(\om)),$$
defined for measurable bounded functions $f$ on $\Om_K$.
 Call a (probability) measure $Q$ on $\Om_K$
 invariant under $\RR$ if $\int \RR f Q(d\om)=\int f Q(d\om)$ for all measurable bounded functions $f$ on $\Om_K$.

The main result of this section is the following generalization of Theorem 1 of
\cite{sabot2013particle}.
\begin{thm}\label{thm:point of view} Let $\kappa := \max\{2(\sum_{j=1}^{d} \al_{e_j}) - (\alpha_{e_i}-\alpha_{-e_i}):{1\le i \le d}\}>0$ and assume $d\ge 3$. Then:
\begin{enumerate}
\item \label{item: kappa larger than 1} If $\kappa > 1$ then there is a unique probability measure $\mathbb{Q}^{(\alpha,Z)}$ on $\Omega_K$ which is invariant under $\mathcal{R}$ and
is absolutely continuous with respect to the initial measure $\anP^{(\alpha,Z)}$.
Moreover, for every $p\in[1,\kappa)$
the Radon-Nikodym derivative $\frac{\mathrm{d}\mathbb{Q}^{(\alpha,Z)}}{\mathrm{d}\anP^{(\alpha,Z)}}$ is in $L_p(\anP^{(\alpha,Z)})$.
(In particular, trivially, the last assertion holds also for every $0<p<1$.)
\item \label{item: kappa smaller than 1} If $\kappa \le 1$ then there is no probability measure satisfying the invariance and absolute continuity properties of the last case.
\end{enumerate}
\end{thm}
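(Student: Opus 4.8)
The plan is to follow the strategy of \cite{sabot2013particle}, adapting it to the arc graph $\hhh_{\Z^d}$ and using the hypergeometric time-reversal property (Corollary \ref{cor:time reversal environment}) in place of the Dirichlet one. For part \ref{item: kappa larger than 1}, I would first use the transience result (Theorem \ref{thm:green moment}) together with the ergodicity of the environment to produce a candidate invariant measure. Concretely, for a large box one considers the walk killed on exiting the box (or with small killing rate $\varepsilon$), builds the associated invariant measure $\mathbb{Q}_\varepsilon$ from the point of view of the walker (which exists by finiteness of state space / positive recurrence), and writes its density with respect to $\anP^{(\alpha,Z)}$ in terms of Green functions. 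The key estimate is a uniform $L_p$-bound, $p\in[1,\kappa)$, on these densities: this is where the combinatorial heart of the argument lies. One expresses the density via sums of products of $\om$-weights along paths and uses the time-reversal invariance to rewrite hitting probabilities in the reversed environment; then a max-flow / min-cut argument on $\hhh_{\Z^d}$ (the analogue of Section 7.2 of \cite{sabot2016survey}) shows the relevant integrals are finite precisely when the flow parameter stays below $\kappa$. Having the uniform bound, one extracts a weakly convergent subsequence $\mathbb{Q}_\varepsilon \to \mathbb{Q}^{(\alpha,Z)}$; the limit is absolutely continuous with $L_p$ density by lower semicontinuity of the $L_p$-norm, and invariance under $\RR$ passes to the limit. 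Uniqueness follows from ergodicity of $(\overline{\om}_n)$ under any such absolutely continuous invariant measure, which in turn follows from the i.i.d.\ (hence mixing) structure of $\anP^{(\alpha,Z)}$ and transience.

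The crucial input from the hypergeometric structure is that $\kappa$ is the \emph{same} function of the single-edge weights $\alpha$ as in the Dirichlet case, independent of $Z$ and of the pair-weights. This is exactly what Corollary \ref{cor:time reversal environment} buys us: the marginal law of the reversed environment is again hypergeometric with the roles of incoming/outgoing weights swapped, and the relevant integrability exponent in the max-flow estimate is governed only by the $\alpha_{e_i}$'s. So the combinatorial optimization determining the threshold reduces verbatim to the Dirichlet computation giving $\kappa = \max\{2(\sum_{j=1}^{d} \al_{e_j}) - (\alpha_{e_i}-\alpha_{-e_i}):1\le i \le d\}$.

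For part \ref{item: kappa smaller than 1}, I would argue by contradiction: suppose $Q \ll \anP^{(\alpha,Z)}$ is invariant under $\RR$ with density $g$. The invariance equation, combined with the time-reversal property, forces a relation on $g$ that can be integrated against well-chosen local test functions; evaluating the relevant moment of $\om$-ratios near a single vertex produces a divergent integral exactly when $\kappa \le 1$, because the density $\varphi(\alpha,\alpha;Z;u)$ behaves like $u_i^{\alpha_i - 1}$ near the face $u_i = 0$ and the combination appearing has total exponent $\kappa - 1 \le 0$. This is the standard ``traps near a vertex'' obstruction: the contribution of environments where one outgoing weight is tiny makes the putative density non-integrable, contradicting $Q$ being a probability measure.

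The main obstacle I anticipate is the uniform $L_p$-bound in part \ref{item: kappa larger than 1}: one must carefully set up the max-flow problem on the arc graph $\hhh_{\Z^d}$ rather than on $\ggg_{\Z^d}$, track how the $Z$-dependent reweighting $\om_{(x-e_i,x),(x,x+e_j)} = Z_{i,j}u_{x,x+e_i}/\sum_l Z_{i,l} u_{x,x+e_l}$ interacts with the time-reversed hypergeometric density, and verify that all the extra $Z$-factors are bounded above and below (which they are, since $Z$ has strictly positive entries and the simplex is compact enough after the $u$-integration is controlled) so that they do not affect the integrability threshold. Once that bookkeeping is done, the rest follows the Dirichlet template.
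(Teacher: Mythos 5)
Your overall strategy matches the paper's: establish a uniform $L_p$ bound on the densities of finite-volume stationary laws using the time-reversal property (Corollary \ref{cor:time reversal environment}), the duality of the hypergeometric normalizations (Lemma \ref{lem:duality}), and a min-cut flow on the arc graph, then pass to a subsequential weak limit. The main structural difference is the regularization scheme. The paper works on the $N$-torus: the walk there has a genuine invariant probability $\pi^\om_N$ on $E_N$, the density is the explicit quantity $f_N(\om)=2dN^d\,\pi^\om_N(e_0)$, and Lemma \ref{lem:main lemma - bounding f_N} bounds $\sup_N\|f_N\|_{L_p}$ via the identity $\check{\om}^{\check{\Theta}}/\om^{\Theta}=(\pi^{\om})^{\dive(\Theta)}$ (Lemma \ref{lem:identity for theta to the div over the same with the checks}) together with the flow construction of Lemma \ref{lem: Construction of total flow of edges}. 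Your version instead uses a walk killed on exiting a box. As phrased this has a gap: a chain killed on exiting a box is absorbing and has no invariant probability, so ``exists by finiteness of state space / positive recurrence'' does not hold as stated; you would need to add a resurrection mechanism, or pass to the torus as the paper does. Once fixed the two routes are essentially equivalent, but the torus avoids bookkeeping with Green functions and resurrection terms and lets the stationary mass $\pi^\om_N(e_0)$ play directly the role of the density.

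For part \ref{item: kappa smaller than 1} your ``traps near a vertex'' heuristic is in the right spirit, but the paper implements it differently: it invokes transience (Theorem \ref{thm:green moment}) and Lemma \ref{lem:leaving edge infty}, which shows the expected exit time from a two-edge trap $\{e_i,\check{e_i}\}$ is infinite once $\kappa\le 1$, then follows the proof of Theorem 1(II) of \cite{sabot2013particle}. The obstruction there lives at the level of annealed exit times rather than a pointwise non-integrability of a putative density against local test functions; your version could probably be made rigorous, but you would have to show the invariance relation produces a quantity dominated below by such a trap contribution, whereas the paper's route through Lemma \ref{lem:leaving edge infty} is already in place. Finally, the paper delegates uniqueness to the standard argument cited in \cite{sabot2013particle}; your appeal to ergodicity is the intended route, though note that the i.i.d.\ structure is at the level of the vertex fields $U(x)$ while the induced environment on $\Om_K$ is only one-dependent, so the ergodicity claim should be phrased at the vertex level.
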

The parameter $\kappa$ was considered first in \cite{sabot2011transience} in the context of $\Z^d$,
and was introduced by Tournier \cite{tournier2009integrability} for finite graphs.
Let us give an interpretation of this parameter. If $ S \subset V$ is a nonempty set of vertices, the \emph{outer boundary} of $S$ is defined by
$$\partial_+(S)=\{e\in E: \underline{e}\in S \text{ but } \overline{e}\notin S \}.$$
Define also $\alpha(\partial_+(S)) = \displaystyle\sum_{e\in \partial_+(S) }\alpha_e$,
the total $\alpha$-strength of the edges leaving $S$. Then
\begin{equation}\label{eq:def of kappa}
\kappa = \max\{\alpha(\partial_+(\{0,\overline{e}_i\})):i=1,...,d\}
\end{equation}
represents the maximal weight of the outer boundary of a single edge. Roughly speaking,
it means that the strongest traps in this model are the traps consisting of a single edge, and the strength of these traps is the outer weight.
This last assertion is justified by the following lemma.

\begin{lem}\label{lem:leaving edge infty} Let $T_i=:\inf\{n\ge 0: X_n\notin \{\{0,e_i\},\{e_i,0\}\}\}$, $i=1,...,2d$, be the exist times from the set $\{\{0,e_i\},\{e_i,0\}\}$ of directed edges.
If $\kappa\le 1$, $\E_{e_0}^{(\al,Z)}[T_i]=\infty$ for some $1\le i\le 2d$.
\end{lem}
\begin{proof}
Using \eqref{eq:def of dpepndent environment on Zd} and the independence of the $u_e$ between vertices, and noticing that under $\quP_{e_0,\om}$, $T_i$ is a geometric random variable with expectation $\frac{1}{1-\om(\{0,e_i\}, \{e_i,0\})\om(\{e_i,0\}, \{0,e_i\})}$, the proof is concluded in a similar manner as in \cite[Chapter 3.2]{tournier2009integrability}.
\end{proof}

\begin{rem}\label{lem:leaving edge moments} We believe that the statement of the last lemma can be strengthened to say that $\E_{e_0}^{(\al,Z)}[T_i^s]=\infty$ for some $1\le i\le 2d$ if and only if $s\ge \kappa$. Since the proof should be somewhat involved, and since we shall use only the weak form of the lemma (namely an implication in the case $s=1$), this is not done in the current paper.
\end{rem}

\section{General graphs}\label{sec:Settings}
It is necessary for the proof to define our random environments on general graphs. This is done
in Section \ref{subsec:notations} and \ref{subsec:the model} below.

\subsection{Directed arc graph}\label{subsec:notations}
Remember that a directed graph is connected if for any two vertices $x$ and $y$ there is a directed path connecting $x$ to $y$, or connecting $y$ to $x$.
Let $\cG=(V,E)$ be a connected directed graph with \emph{vertices} and \emph{edges}
such that the in-degrees and out-degrees are finite at each vertex. Here and after in-degree (out-degree) of a vertex $x\in V$
is the number of vertices $y\in V$ that $(y,x)\in E$ (respectively, $(x,y)\in E$).
For each edge $e$ we denote by $\underline e$ and $\overline e$ the tail and head of the edge so that $e=(\underline e, \overline e)$, and we denote by $\check e=(\overline e, \underline e)$ the ``reversed edge''. We denote by $\check \cG=(V, \check E)$ the reversed graph with edge set $\check E:=\{\check e, \; e\in E\}$.

We define the (directed and connected)
\emph{arc graph} $\cH=(E,K)$ with \emph{nodes} $E$ and \emph{arcs} $K$ by setting $K:=\{k=(e,e')\in E^2 : \overline{e}=\underline{e}'\}$.
In words, $\cH$ is the graph so that its nodes are the edges of $\cG$ and its arcs are directed pairs of edges of $\cG$ that share a common vertex, the head of the first edge and the tail of the second one.
Define the reversed graph $\check{\cH}=(\check{E},\check{K})$ by the relation $(\check{e}',\check{e})\in \check{K} \leftrightarrow (e,e')\in K$.
Clearly, $\check \cH$ is also the arc graph of the reversed graph $\check \cG$.

Let $\Om_K\subset (0,1]^K$ be defined by
$$\Om_K = \left\{ \om : \displaystyle\sum_{e':(e,e')\in K}\om(e,e')=1,\, \forall e\in E\right\},$$
seen as a topological (measurable) subspace of $(0,1]^K$ with the standard topology (Borel $\sigma$-algebra).
The space $\Om_K$ will be the space of environments of Markov chains on the directed graph $\cH$. The space
$\Om_{\check{K}}$ is defined similarly for the reversed graph $\check \cH=(\check E,\check{K})$. As in Section \ref{model-Zd},
we note that $\Om_K$ also describes the one-dependent Markov chains on the graph $\cG$.


\subsection{The model on a general directed arc graph} \label{subsec:the model}
Let $\cG=(V,E)$ be a directed connected graph, and let $\cH$ be the corresponding arc graph.
Fix strictly positive parameters $(\alpha_e)_{e\in E}$ and $(Z_{e,e'})_{(e,e')\in K}$. Recall the definition of $\varphi$ and $\Phi$ in Section \ref{subsec:hypergeometric}.
For every $x\in V$, let
\begin{equation}\label{def:phix}
\varphi_x(\al;Z;u)= \varphi((\alpha_{e})_{\underline{e}=x}, (\alpha_e)_{\overline{e}=x};
(Z_{e,e'})_{\overline{e}=x=\underline{e}'} ; (u_e)_{\underline{e}=x})
\end{equation}
be defined for $u$ in the $\odeg{x}$-simplex
\[
\Delta^{(x)}:=\{(u_e)_{\underline{e}=x} : u_e > 0,\,\displaystyle\sum_{\{e:\underline{e}=x\}} u_e =1\}.
\]
Here $\odeg{x}$ is the out-degree of $x$.
Similarly we let, as in \eqref{eq:hypergeometric function},
\begin{equation}\label{def:Phix}
\Phi_x(\alpha;Z) := \int_{\Delta^{(x)}} \varphi_x (\alpha;Z;u)\text{d}_x u=\Phi((\alpha_{e})_{\underline{e}=x}, (\alpha_e)_{\overline{e}=x};
(Z_{e,e'})_{\overline{e}=x=\underline{e}'}) ,
\end{equation}
where $\text{d}_x u =\prod_{\underline{e}=x, e\neq e_x} \text{d} u_e$ is the measure on $\Delta^{(x)}$ defined in Section \ref{subsec:hypergeometric}, where $e_x$ is
an arbitrary choice of edge exiting $x$ (obviously, $du$ does not depend on the choice of $e_x$).
Let $U(x),{x\in V}$, be random vectors with values in $\Delta^{(x)}$, which are independent and
distributed according to the density
\[
\frac{1}{\Phi_x(\alpha,Z)} \varphi_x(\alpha; Z ; u) \text{d}_x u.
\]
For every $e\in E$ let $u_e:=U_e({\underline{e}})$, the $e$ coordinate of the random vector $U({\underline{e}})$. We denote by $\pr^{(\alpha, Z)}$ the distribution on $(u_e)_{e\in E}$ defined in this way.
Denote by $\E^{(\alpha, Z)}$ the corresponding expectation.

From the random variables $u_e$, $e\in E$, we construct an environment $\om\in\Om_K$ by
\begin{equation}\label{eq:def of environment law}
\om(e,e') := \frac {Z_{e,e'} u_{e'}}  {\sum_{e'':\underline{e}''=x} Z_{e,e''} u_{e''} }, \;\;\forall (e,e')\in K.
\end{equation}
With a slight abuse of notation, we also denote by $\pr^{(\alpha,Z)}$ the law thus induced on $\Om_K$. For $\om\in\Om_K$ we denote by $\quP_{e,\om}$
the law of the Markov chain $X$ on $E$ started at $e\in E$ with step distribution $\om$.
Whenever $\om$ is sampled according to $\pr^{(\alpha,Z)}$, the law of the last Markov chain is called the \emph{quenched law}. Denote by $\anP_{e}^{(\alpha,Z)}$ the marginal law
of the joint law of the Markov chain started at $e$ and the environment distributed
according to $\pr^{(\alpha,Z)}$. The latter is also called the \emph{averaged law}, or \emph{annealed law} of the walk $X$, and is characterized by
$$\anP_{e}^{(\alpha,Z)} (\cdot) = \int \quP_{e,\om} (\cdot) \text{d} \anP^{(\alpha,Z)}(\om).$$

Note that, as in the case of $\Z^d$, if $(Z_{e,e'})_{\overline{e}=x=\underline{e}'}$, $x\in V$, are matrices with constant rows (i.e.\ $Z_{e,e'}= c_{e}$ for every $(e,e')\in K$), then $U(x)$
has the $\mathrm{Dirichlet} ( (\alpha_{e})_{\underline{e}=x})$ distribution. Hence $\om$ is an i.i.d\ $\mathrm{Dirichlet} ((\alpha_{e})_{\underline{e}=x})$
environment, and the walk is a standard random walk in Dirichlet environment.

The model defined in Section \ref{model-Zd} on $\Z^d$ obviously corresponds to the case where the parameters $(\alpha_e)_{e\in E}$ and $(Z_{e,e'})_{(e,e')\in K}$ are given by
$$
\alpha_{x,x+e_i}=\alpha_i, \; \forall x\in \Z^{d},\; i=1, \ldots, 2d,  \;\hbox{ and } \; Z_{(x-e_i),(x+e_j)}=Z_{i,j}, \; \forall x\in \Z^{d},\; i,j=1, \ldots, 2d,
$$
with notation as in Section \ref{model-Zd}. We warn the reader about the little confusion of notation between $(\alpha_i)$ and $(\alpha_e)$ and $(Z_{i,j})$ and $(Z_{e,e'})_{(e,e')\in K}$ but we think it will be clear enough from the context. Obviously, the model of Section \ref{model-Zd} describes all the parameters on $\cH_{\Z^d}$ which are invariant by translation, i.e. which satisfy
$
\alpha_{e}=\alpha_{x+e}$, for all $x\in \Z^d$, $e\in E$ and  $Z_{e,e'}=Z_{x+e,x+e'}$,  for all $x\in \Z^d$ and $(e,e')\in K$.

\subsection{A remark on our motivation}
The origin of this work comes from the following fact proved in \cite[Section 8.3]{sabot2016survey}.
In dimension 1 the rate function of the annealed large
deviation principal for the hitting time of a level $k$ is
computed in terms of the hypergeometric function
$\,_2F_1$. The proof is based on the identification of the law of a the solution of a distributional equation,
inspired by Chamayou and Letac, \cite{chamayou_letac_1991}.
The symmetry property of $\,_2F_1$, which is a special case of the duality property proved in Appendix \ref{appendix:Duality of hypergeometric functions},
is at the core of the argument. In the one-dimensional case, this identity generalizes the statistical
time-reversal property. An very interesting problem, which is still open, is to find
a multidimensional counterpart for the rate function formula.

Another motivation is to find other models that share the same type of
statistical time-reversal property with Dirichlet environments. We believe that Dirichlet environments are the only non-trivial model
based on independent transition probabilities at each site that have this property. The model presented
here is a natural extension of the Dirichlet environment that allows one-dependence of the quenched Markov chain
and that shares similar property.


\section{Main tools}\label{sec:tools}


\subsection{Marginal and multiplicative moments}\label{sec:marginal and multiplicative moments}
We assume in this chapter that the graph $\cG$ is finite.
Our first observation regarding the hypergeometric distribution is the distribution of its marginal.
A direct computation gives that if $\om$ is defined as in \eqref{eq:def of environment law}, then we have for $e,e'$ so that $\overline{e}=x=\underline{e}'$
\begin{equation}\label{eq:moment of om - Beta-like}
\E^{(\alpha,Z)}[\om(e,e')^s] = Z_{e,e'}^s  \frac{ \Phi_x (\alpha+s(\delta_{e}+\delta_{e'}),Z)}{\Phi_x(\alpha,Z)}.
\end{equation}
In particular we see that the above is finite whenever the arguments of $\Phi_x$ is strictly positive,
and in particular as long as $s>-\min\{\alpha_{e},\alpha_{e'}\}$.
Note that in the Dirichlet case, e.g.\ whenever $Z\equiv 1$, we have that $\om(e,e')=u_{e'}$ has the
Beta distribution $\mathrm{Beta}(\alpha_{e'}, \sum_{\overline e = x}\alpha_e - \alpha_{e'})$.

Next, we shall expand the definition of the measure $\PP^{(\alpha,Z)}$ on environments to include
a possibility to increase or decrease the weights
$\alpha$ and $Z$.

Assume here that $\cG$ is finite.
For a function $\xi:K\to \reals$ let
$$\overline{\xi}_e:=\displaystyle\sum_{\underline{e}'=\overline{e}}\xi(e,e')\,\, \text{  and  }\,\,
\underline{\xi}_{e'}:=\displaystyle\sum_{\overline{e}=\underline{e}'}\xi(e,e')$$
be the total `weight' leaving $e$, and entering $e'$, respectively.

We now define the measure  $\PP^{(\alpha,\xi,Z)}$ on $\Om_K$ by a similar procedure.
For every $x\in V$ and  $u\in \Delta^{(x)}$ we let
\[
\varphi_x(\al;\xi;Z;u)= \varphi(
(\alpha_{e}+ \underline{\xi}_{e})_{\underline{e}=x},
(\alpha_e + \overline{\xi}_e )_{\overline{e}=x};
(Z_{e,e'})_{\overline{e}=x=\underline{e}'} ;
(u_e)_{\underline{e}=x}),
\]
and similarly
\[
\Phi_x(\alpha;\xi;Z) := \int_{\Delta^{(x)}} \varphi_x (\alpha;\xi;Z;u)\text{d}_x u.
\]
This is well-defined as long as $\alpha_e + \overline{\xi}_e>0$ and $\alpha_e + \underline{\xi}_e>0$ for all $e\in E$.
Next, $U_x,{x\in V}$, are taken to be independent with density
\[
\frac{1}{\Phi_x(\alpha;\xi;Z)} \varphi_x(\alpha;\xi;Z ; u) \text{d}_x u.
\]
Putting $u_e:=U_{\underline{e}}(e)$, $e\in E$, and constructing $\om\in\Om_K$ as in \eqref{eq:def of environment law},
we denote its quenched and annealed laws by $\quP_{e_0,\om}$ and $\PP^{(\alpha,\xi,Z)}_{e_0}$.
Note that in the case $\xi\equiv 0$ we have $\PP^{(\alpha,0,Z)}_{e_0}=\PP^{(\alpha,Z)}_{e_0}$.

It will be beneficial to define
\begin{equation}\label{eq: def of product of hypergeometric on all vertices}
 F(\alpha;\xi;Z):=\prod_{x\in V}\Phi_x(\alpha;\xi;Z),\,\text{ and } F(\alpha;Z):=F(\alpha;0;Z).
\end{equation}
Also, for functions $\beta,\gamma:A\to\reals_+$ so that $A$ is a finite set and $\beta$ is strictly positive, we define
\begin{equation}\label{eq:def of power of functions}
\beta^{\gamma} := \displaystyle\prod_{x\in A} \beta(x)^{\gamma(x)}.
\end{equation}
A direct computation gives that for every $\xi,\Theta:K\to\reals$
\begin{equation}\label{eq:expectation of om to the xi}
\E^{(\alpha,\Theta,Z)}[\om^\xi] = Z^{\Theta+\xi} \cdot \frac{ F (\alpha;\Theta+\xi;Z)}{F(\alpha;\Theta;Z)},
\end{equation}
as long as the right hand side of the equation is well defined.

If we think of $\PP^{(\alpha,Z)}$ as the law of $(u_e)_{e\in E}$, i.e. a measure on $\prod_{x\in V} \Delta ^{(x)}$, then the Radon-Nikodym derivative one gets by changing the values of $\alpha$ is explicit. Indeed, for $\theta: E\to\reals_+$ so that $\al_e>\theta_e$ for all $e\in E$, and for any random variable $Y(\om)=(Y\circ\om)(u)$
\begin{equation}\label{eq:radon-Nikodym om plus theta}
\E^{(\alpha,Z)}[Y] = \frac{ F (\alpha+\theta,Z)}{F(\alpha,Z)} \E^{(\alpha + \theta,Z)}[ \tilde{u}^{-\theta}\cdot Y],
\end{equation}
where $$\tilde{u}_e:=\frac{u_e}{\sum_{\underline{e'}=\overline{e}} Z_{e,e'}u_{e'}}.$$

\subsection{Duality formula}\label{subsec:duality formula}

A key feature of the hypergeometric functions defined in \eqref{eq:hypergeometric function} is the following duality
formula \cite[Page 169]{aomoto2011theory}, which has consequences regarding time-reversing.
This will be discussed in Chapter \ref{sec:time reversal}, and a direct proof of Lemma \ref{lem:duality}
will be supplied in Appendix \ref{appendix:Duality of hypergeometric functions}.
Define
\begin{equation}
B(\alpha)=B(\alpha_1,...,\alpha_n)=\frac{\prod_{i=1}^{n}\Gamma(\alpha_i)}{\Gamma\left(\sum_{i=1}^{n}\alpha_i\right)},
\end{equation}
where $\Gamma$ is the standard Gamma function, i.e.\ $\Gamma(t)=\int_{0}^{\infty}x^{t-1}e^{-x}dx$.
\begin{lem}[Duality formula]\label{lem:duality} With the notation from \eqref{eq:hypergeometric function}, the following holds as soon as $\sum_{i=1}^{n} \alpha_i=\sum_{j=1}^l \beta_j$
\[
B(\alpha)^{-1} \Phi(\alpha,\beta,Z) = B(\beta)^{-1} \Phi(\beta, \alpha,Z^t),
\]
where $Z^t$ is the transposed matrix corresponds to $Z$.
\end{lem}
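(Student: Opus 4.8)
The plan is to prove the duality formula
\[
B(\alpha)^{-1} \Phi(\alpha,\beta;Z) = B(\beta)^{-1} \Phi(\beta,\alpha;Z^t)
\]
by introducing a symmetric double integral that manifestly equals both sides. First I would recall the integral (Dirichlet / Euler–type) representation of the Gamma factors: for a vector $u\in\Delta^{(n)}$ one has $\prod_i u_i^{\alpha_i-1}\,du = B(\alpha)^{-1}$-normalized, and for the negative powers $(Z\cdot u)_j^{-\beta_j}$ one uses the identity
\[
t^{-\beta} = \frac{1}{\Gamma(\beta)}\int_0^\infty s^{\beta-1} e^{-st}\,ds,
\]
valid for $t>0$, $\beta>0$. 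Applying this to each factor $(Z\cdot u)_j^{-\beta_j}$, $j=1,\dots,l$, introduces auxiliary variables $s=(s_1,\dots,s_l)\in(\reals_+^*)^l$ and turns $\Phi(\alpha,\beta;Z)$ into
\[
\Phi(\alpha,\beta;Z) = \frac{1}{\prod_j \Gamma(\beta_j)} \int_{\Delta^{(n)}}\int_{(\reals_+^*)^l} \Big(\prod_i u_i^{\alpha_i-1}\Big)\Big(\prod_j s_j^{\beta_j-1}\Big)\, \exp\!\Big(-\sum_{i,j} Z_{j,i} u_i s_j\Big)\, ds\, du.
\]
This step needs Fubini (legitimate since everything is nonnegative) and the finiteness already noted in the excerpt, namely $(Z\cdot u)_j\ge\underline z>0$.

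Next I would symmetrize. The bilinear form $\sum_{i,j} Z_{j,i} u_i s_j$ in the exponent is exactly the quantity whose role is swapped under $Z\leftrightarrow Z^t$, $u\leftrightarrow s$, $\alpha\leftrightarrow\beta$, $n\leftrightarrow l$. So the idea is to run the same manoeuvre in the reverse direction: instead of integrating $u$ over the simplex and $s$ over the positive orthant, integrate $s$ over the simplex and $u$ over the positive orthant. To connect the two pictures, I would pass to a fully symmetric form by scaling. Write $u_i = r\,\hat u_i$ with $\hat u\in\Delta^{(n)}$ and $r = \sum_i u_i>0$; then $\prod_i u_i^{\alpha_i-1}\,du = r^{\sum_i\alpha_i - 1}\big(\prod_i\hat u_i^{\alpha_i-1}\big)\,dr\,d\hat u$ (standard polar-type decomposition of the positive orthant over the simplex). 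Doing the analogous decomposition $s_j = \rho\,\hat s_j$ and using $\sum_i\alpha_i = \sum_j\beta_j =: c$, the exponent becomes $-r\rho\,(\hat s^t Z \hat u)$ and the two radial integrals $\int_0^\infty r^{c-1}\cdots dr$ and $\int_0^\infty \rho^{c-1}\cdots d\rho$ appear symmetrically. Carrying out, say, the $r$–integral produces a Gamma factor $\Gamma(c)$ times $(\rho\,\hat s^t Z\hat u)^{-c}$; then $\rho^{c-1}\rho^{-c} = \rho^{-1}$ and the $\rho$–integral over $(0,\infty)$ of $\rho^{-1}$ diverges — so the brute-force symmetrization has to be organized more carefully, keeping one radial variable finite. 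The cleaner route: in the doubled integral above, do the substitution in the $u$-block only, writing $u_i = \hat u_i / (Z^t s)_i \cdot(\text{something})$ is messy; instead integrate out the $u$-simplex against the exponential. Concretely, from the doubled representation, perform the $u$-integral over $\Delta^{(n)}$ first. That is not elementary for fixed $s$, so better: introduce the extra radial variable on the $u$-side, i.e. replace $\int_{\Delta^{(n)}}(\prod u_i^{\alpha_i-1})du$ by $\Gamma(c)^{-1}\int_{(\reals_+^*)^n}(\prod u_i^{\alpha_i-1}) e^{-\sum_i u_i} du$ — wait, that changes the value. The correct identity is $\int_{(\reals_+^*)^n}\big(\prod_i u_i^{\alpha_i-1}\big)f\big(\sum_i u_i\big)\,du = B(\alpha)\int_0^\infty r^{c-1} f(r)\,dr$, applied with $f$ the exponential in the symmetric bilinear form. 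Thus
\[
B(\alpha)\,B(\beta)^{-1}\Phi(\beta,\alpha;Z^t)
= \int_{(\reals_+^*)^n}\int_{(\reals_+^*)^l}\Big(\prod_i u_i^{\alpha_i-1}\Big)\Big(\prod_j s_j^{\beta_j-1}\Big) e^{-\sum_{i,j} Z_{j,i} u_i s_j}\, ds\, du,
\]
obtained by first expanding $\Phi(\beta,\alpha;Z^t)$ via the $t^{-\alpha_i}$ integral representation on the $n$ factors $(Z^t s)_i$ (with $s$ ranging over $\Delta^{(l)}$ and $u$ the new auxiliary orthant variables), then using the displayed simplex-to-orthant identity to turn $\int_{\Delta^{(l)}}(\prod s_j^{\beta_j-1})ds$ into $B(\beta)^{-1}\int_{(\reals_+^*)^l}$ — the point being that the fully-doubled orthant integral is literally invariant under $(u,\alpha,n,Z)\leftrightarrow(s,\beta,l,Z^t)$.

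So the key steps in order are: (1) replace each $(Z\cdot u)_j^{-\beta_j}$ by its Gamma integral to get a double integral over $\Delta^{(n)}\times(\reals_+^*)^l$; (2) use the elementary identity $\int_{(\reals_+^*)^m}(\prod x_k^{\gamma_k-1}) g(\sum x_k)dx = B(\gamma)\int_0^\infty r^{(\sum\gamma_k)-1} g(r)dr$ (proved by scaling $x=r\hat x$) to lift the $\Delta^{(n)}$ integral to a full orthant $(\reals_+^*)^n$ integral at the cost of a factor $B(\alpha)$, yielding the manifestly $(u,s)$-symmetric expression
\[
B(\alpha)^{-1}\Phi(\alpha,\beta;Z) = \frac{1}{\prod_i\Gamma(\alpha_i)\prod_j\Gamma(\beta_j)}\int_{(\reals_+^*)^n}\int_{(\reals_+^*)^l}\Big(\prod_i u_i^{\alpha_i-1}\Big)\Big(\prod_j s_j^{\beta_j-1}\Big) e^{-\sum_{i,j} Z_{j,i} u_i s_j}\, ds\, du;
\]
(3) observe that the right-hand side is exchanged into itself under the simultaneous swap $\alpha\leftrightarrow\beta$, $u\leftrightarrow s$, $Z\leftrightarrow Z^t$ (the bilinear form $\sum_{i,j}Z_{j,i}u_is_j = \sum_{i,j}(Z^t)_{i,j}s_ju_i$ makes this transparent), which is exactly the claimed identity; (4) justify all interchanges of integration by nonnegativity of the integrand (Tonelli) and finiteness, which holds because the constraint $Z_{j,i}>0$ forces the bilinear form to dominate a positive multiple of $\sum_i u_i$ and of $\sum_j s_j$, giving Gaussian-type ($\underline z e^{-\underline z(\sum_i u_i)(\sum_j s_j)}$, after one radial integration) integrable tails. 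The main obstacle is precisely this last point — making the reduction to the symmetric orthant integral rigorous, i.e. checking the constraint $\sum_i\alpha_i=\sum_j\beta_j$ is what makes the radial bookkeeping consistent, and confirming the double orthant integral converges (which requires $\underline z=\min_{i,j}Z_{j,i}>0$, available by hypothesis). Everything else is routine bookkeeping with Gamma integrals and the homogeneity of the Lebesgue measure.
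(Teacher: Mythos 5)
You correctly start by replacing each factor $(Z\cdot u)_j^{-\beta_j}$ by its Gamma-integral representation, and the high-level strategy — pass to a double Laplace-type integral and exploit the $u\leftrightarrow s$, $\alpha\leftrightarrow\beta$, $Z\leftrightarrow Z^t$ symmetry — is exactly the paper's. But there is a concrete gap in step (2), and it breaks the proof. The identity $\int_{\reals_+^m}\bigl(\prod_k x_k^{\gamma_k-1}\bigr)g\bigl(\sum_k x_k\bigr)\,dx = B(\gamma)\int_0^\infty r^{c-1}g(r)\,dr$ is correct, but it needs $g$ to depend on $x$ only through the sum $\sum_k x_k$. After step (1), the $u$-integrand for fixed $s$ is $e^{-\sum_{i,j}Z_{j,i}u_is_j}$, which is a genuine function of the full vector $u$, not of $\sum_i u_i$, so the identity simply does not apply, and lifting the $\Delta^{(n)}$-integral to the orthant at the mere cost of a factor $B(\alpha)$ is not legitimate. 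The resulting ``manifestly symmetric'' formula
\[
\frac{1}{\prod_i\Gamma(\alpha_i)\prod_j\Gamma(\beta_j)}\int_{(\reals_+^*)^n}\int_{(\reals_+^*)^l}\Bigl(\prod_i u_i^{\alpha_i-1}\Bigr)\Bigl(\prod_j s_j^{\beta_j-1}\Bigr)e^{-\sum_{i,j}Z_{j,i}u_is_j}\,ds\,du
\]
is in fact $+\infty$: integrating out $s$ gives $\prod_j\Gamma(\beta_j)\int_{\reals_+^n}\bigl(\prod_i u_i^{\alpha_i-1}\bigr)\prod_j(Zu)_j^{-\beta_j}\,du$, whose integrand is homogeneous of degree $\sum_i\alpha_i-n-\sum_j\beta_j=-n$ in $u$ (using $\sum\alpha_i=\sum\beta_j$), so the radial part of the integral is $\int_0^\infty r^{-1}\,dr=\infty$. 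You in fact notice precisely this divergence mid-argument, but then write the divergent form down anyway as the final identity, so the claimed symmetric representation does not exist.

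The fix, which is what the paper's computation does, is to keep one radial variable cut off rather than integrating both over the full orthant. Concretely: starting from your step (1), pass \emph{only the $s$-variable} to polar form, $s=\rho v$ with $v\in\Delta^{(l)}$, $\rho>0$. Then $ds=\rho^{l-1}d\rho\,dv$, $\prod_j s_j^{\beta_j-1}=\rho^{c-l}\prod_j v_j^{\beta_j-1}$ and $s^tZu=\rho\,v^tZu$, so the $\rho$-integral gives $\Gamma(c)\,(v^tZu)^{-c}$ and you obtain the genuinely finite, genuinely symmetric representation
\[
B(\alpha)^{-1}\Phi(\alpha,\beta;Z)
=\frac{\Gamma(c)^2}{\prod_i\Gamma(\alpha_i)\prod_j\Gamma(\beta_j)}
\int_{\Delta^{(n)}}\int_{\Delta^{(l)}}\Bigl(\prod_i u_i^{\alpha_i-1}\Bigr)\Bigl(\prod_j v_j^{\beta_j-1}\Bigr)\bigl(v^tZu\bigr)^{-c}\,dv\,du,
\]
with $c=\sum_i\alpha_i=\sum_j\beta_j$, and this double-simplex integral is manifestly invariant under the swap $(\alpha,u,n,Z)\leftrightarrow(\beta,v,l,Z^t)$. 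The paper achieves the same thing in a slightly more roundabout way: it lifts the $u$-integral from $\Delta^{(n)}$ to $\reals_+^n$ by inserting an explicit damping factor $e^{-\sum_i u_i}$ while simultaneously rescaling $v$ by $1/\sum_i u_i$ in the exponent, so that the $\mu$-radial integral factors out as $\Gamma(c)$; it then performs the change of variables $u\mapsto\bigl(\sum_j v_j/\sum_i u_i\bigr)u$, $v\mapsto\bigl(\sum_i u_i/\sum_j v_j\bigr)v$ to move the damping from the $u$-radius to the $v$-radius, and undoes the construction. Both routes are the same idea; the one thing you must not do is drop both radial cutoffs at once.
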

We remark that in the Dirichlet case (e.g., whenever $Z\equiv 1$) both $\Phi(\alpha,\beta,Z)=B(\alpha)$ and $\Phi(\beta, \alpha,Z^t)=B(\beta)$
and so in this case the duality is trivial.


\subsection{Time-reversal statistical invariance}\label{sec:time reversal}

In this section we assume that the graph $\cG=(V,E)$ is finite. For $\om\in\Om_K$, let $\pi^\om=(\pi^\om(e)))_{e\in E}$ be the invariant probability measure of the Markov chain on $E$ with transition probabilities $\om$.
(Note that by ellipticity of $\om$, the finite state Markov chain is a.s.\ irreducible and hence $\pi^{\om}$ is a.s.\ unique.)
Define the time reversed environment $\check{\om}\in \Omega_{\check{K}}$ by letting
\begin{equation}\label{eq:def of om check}
\check{\om}(\check{e}',\check{e}) = \pi^{\om}(e) \om(e,e') \pi^{\om}(e')^{-1}.
\end{equation}
Let $\check{\pi}^{\check{\om}}$ be the invariant probability measure of the Markov chain on $\check{E}$ with
transition probabilities $\check{\om}$. Then, since $\check{\pi}^{\check{\om}}$
is also the invariant probability measure of the time reversed chain defined by $\omega$, we have
\begin{equation}\label{eq: time reversed invariant measure}
\check{\pi}^{\check{\om}}(\check{e})=\pi^{\om}(e)
\end{equation}
for every $e\in E$.
Note that $\check{\om}$ is an element of $\Omega_{\check K}$.

Let $\check{\alpha}_{\check{e}}:=\alpha_e$ for every $e\in E$. Also, denote $\check{Z}$ the `reversed' matrices corresponds to $Z$, that is $\check{Z}_{\check{e'},\check{e}} = (Z^t)_{e',e} = Z_{e,e'}$.
Let $C=\{e_0,e_1,...,e_n=e_0\}$ be a cycle in $\cH$, $n=n(C)$ is its length. (The reader should notice that here $C$ is a cycle of edges,
and so viewed as a sequence of vertices it has the form
$\{ \underline{e_0}, \underline{e_1},\underline{e_2},...,\underline{e_n}=\underline{e_0}, \overline{e_0}=\underline{e_1}\}$,
i.e., a cycle of vertices plus a repetition of the vertex $\underline{e_1}$.)
Define $\check{C}:=\{\check{e}_n,\check{e}_{n-1},...,\check{e}_0=\check{e}_n\}$ to be the corresponding reversed
cycle in $\check{\cH}$.
For a finite collection $\mathcal{C}$ of cycles we denote by $\check{\mathcal{C}}:=\{\check{C}: C\in \mathcal{C}\}$.
Set $\om_{C} := \prod_{k=0}^{n-1} \om_{e_k,e_{k+1}}$, and $\om_{\mathcal{C}}:=\prod_{C\in\mathcal{C}}\om_C$.
By \eqref{eq:def of om check}, we have
$$
\om_C=\check\om_{\check C},
$$
for all cycles $C$.
Similarly, we set $Z_{C} := \prod_{k=0}^{n-1} Z_{e_k,e_{k+1}}$ and $Z_{\mathcal{C}}:=\prod_{C\in\mathcal{C}}Z_C$.
We have, by definition of $\check Z$, that $Z_{C}=\check Z_{\check C}$ for all cycle $C$.

We introduce now the divergence operator on the graph $\cG$: we define $\dive :\R^E\mapsto \R^V$ by
$$
\dive(\theta)(x)=\sum_{\underline e=x} \theta(e)-\sum_{\overline e=x} \theta(e), \;\;\; \forall \theta\in \R^E.
$$

\begin{lem}\label{lem:cycles coincide} Assume $\dive(\alpha)=0$. The following hold for all finite collections of cycles ${\mathcal C}$,
\[
\E^{(\alpha,Z)} (\check{\om}_{\check{\mathcal{C}}})=\E^{(\check{\alpha},\check{Z})}(\om_{\check{\mathcal{C}}}).
\]
\end{lem}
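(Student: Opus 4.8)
The plan is to express $\check\om_{\check{\mathcal C}}$ as a function of the $u_e$'s on the original graph $\ggg$, apply the Radon--Nikodym-type change of weights from \eqref{eq:expectation of om to the xi}, and then match the resulting hypergeometric products on the two sides using the duality formula of Lemma~\ref{lem:duality} vertex by vertex. First I would recall that by the cycle identity $\om_C=\check\om_{\check C}$ established just above, we have $\check\om_{\check{\mathcal C}}=\om_{\mathcal C}$, so the left-hand side is $\E^{(\alpha,Z)}(\om_{\mathcal C})$. The collection $\mathcal C$ of cycles determines a function $\xi=\xi_{\mathcal C}:K\to\Z_{\ge 0}$ counting how many times each arc $(e,e')$ is traversed, so that $\om_{\mathcal C}=\om^{\xi}$. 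Then \eqref{eq:expectation of om to the xi} gives
\[
\E^{(\alpha,Z)}(\om_{\mathcal C})=Z^{\xi}\cdot\frac{F(\alpha;\xi;Z)}{F(\alpha;Z)}.
\]
Since $\mathcal C$ is a union of cycles, the associated $\xi$ is \emph{flow-conservative} on the arc graph $\hhh$, which translates into the statement that at each vertex $x\in V$ the added weights satisfy $\sum_{\underline e=x}\underline\xi_e=\sum_{\overline e=x}\overline\xi_e$; combined with $\dive(\alpha)=0$ this ensures that the parameter vectors $((\alpha_e+\underline\xi_e)_{\underline e=x},(\alpha_e+\overline\xi_e)_{\overline e=x})$ feeding each $\Phi_x$ have equal sums, so Lemma~\ref{lem:duality} applies at every vertex.

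Next I would do the same computation on the reversed side. Here $\om_{\check{\mathcal C}}$ is a product over arcs of $\check\hhh$, and the corresponding multiplicity function on $\check K$ is $\check\xi(\check e',\check e)=\xi(e,e')$. Applying \eqref{eq:expectation of om to the xi} on the reversed graph $\check\ggg$ with weights $\check\alpha,\check Z$ gives
\[
\E^{(\check\alpha,\check Z)}(\om_{\check{\mathcal C}})=\check Z^{\check\xi}\cdot\frac{F(\check\alpha;\check\xi;\check Z)}{F(\check\alpha;\check Z)}.
\]
Because $Z_C=\check Z_{\check C}$ for every cycle, the prefactors agree: $Z^{\xi}=\check Z^{\check\xi}$. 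So it remains to prove the identity of the $F$-ratios, and since $F$ factorizes over vertices and the reversal map $x\mapsto x$ is a bijection $V\to V$ sending the local data at $x$ in $\ggg$ to the local data at $x$ in $\check\ggg$, it is enough to show for each $x$
\[
\frac{\Phi_x(\alpha;\xi;Z)}{\Phi_x(\alpha;Z)}=\frac{\check\Phi_x(\check\alpha;\check\xi;\check Z)}{\check\Phi_x(\check\alpha;\check Z)}.
\]
Now the head-edges at $x$ in $\ggg$ are exactly the tail-edges at $x$ in $\check\ggg$ and vice versa, and $\check Z$ at $x$ is the transpose of $Z$ at $x$; so Lemma~\ref{lem:duality} applied to $\Phi_x(\cdot)$ (with parameters $(\alpha_e+\underline\xi_e)$, $(\alpha_e+\overline\xi_e)$, matrix $Z|_x$) identifies $\Phi_x(\alpha;\xi;Z)$ with $\frac{B(\text{in-params})}{B(\text{out-params})}\,\check\Phi_x(\check\alpha;\check\xi;\check Z)$, and similarly for $\xi\equiv 0$. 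Taking the ratio, the $B$-factors involving the $\xi$-shifted parameters do not immediately cancel against those for $\xi\equiv 0$, so I would keep track of them: the numerator contributes $B((\alpha_e+\underline\xi_e)_{\underline e=x})/B((\alpha_e+\overline\xi_e)_{\overline e=x})$ and the denominator $B((\alpha_e)_{\underline e=x})/B((\alpha_e)_{\overline e=x})$, and the claim is that the product of these over all $x$ is $1$. This is where flow-conservation of $\xi$ enters decisively: summing the arguments, $\prod_x B((\alpha_e+\underline\xi_e)_{\underline e=x})$ and $\prod_x B((\alpha_e+\overline\xi_e)_{\overline e=x})$ are products of the same multiset of $\Gamma(\alpha_e+(\text{multiplicity}))$-type factors reorganized by tail versus head, and the denominators $\sum$-arguments coincide because $\dive(\alpha)=0$ and $\xi$ is conservative; a careful bookkeeping shows all these $B$-factors cancel in the global product.

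The main obstacle is precisely this last bookkeeping: the duality lemma is a \emph{local} (per-vertex) statement that introduces $B$-prefactors, and one must verify that after multiplying over all vertices and forming the $\xi$-versus-$0$ ratio, every $\Gamma$-factor cancels. I would organize this by noting that each edge $e$ appears once as an out-edge (at $\underline e$) and once as an in-edge (at $\overline e$), so $\prod_x\prod_{\underline e=x}\Gamma(\alpha_e+\underline\xi_e)=\prod_e\Gamma(\alpha_e+\underline\xi_e)$ and $\prod_x\prod_{\overline e=x}\Gamma(\alpha_e+\overline\xi_e)=\prod_e\Gamma(\alpha_e+\overline\xi_e)$; these two products need not be equal term-by-term, but their ratio is the \emph{same} as the corresponding ratio with $\xi\equiv 0$ \emph{only if} $\underline\xi_e$ and $\overline\xi_e$ have matching total effect, which again follows from $\xi$ being a $\Z_{\ge0}$-flow on $\hhh$ (so $\sum_e\underline\xi_e\,\delta$-contributions rearrange correctly) — and the $\Gamma(\sum\cdots)$ denominators cancel because the per-vertex sums are unchanged between the in- and out-groupings thanks to $\dive(\alpha)=0$. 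Once this cancellation is checked, the identity $\E^{(\alpha,Z)}(\check\om_{\check{\mathcal C}})=\E^{(\check\alpha,\check Z)}(\om_{\check{\mathcal C}})$ follows by combining the two displayed expressions with $Z^\xi=\check Z^{\check\xi}$ and the vertexwise duality.
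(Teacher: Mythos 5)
Your proposal is correct and takes essentially the same route as the paper: rewrite the cycle expectation as $Z^{\xi}\,F(\alpha;\xi;Z)/F(\alpha;Z)$ via the moment formula, apply the per-vertex duality Lemma~\ref{lem:duality}, and verify that the resulting $B$-prefactor products cancel globally because $\dive(\alpha)=0$ and the cycle-multiplicity function satisfies $\underline\xi_e=\overline\xi_e$ (so that $\prod_e\Gamma(\alpha_e+\underline\xi_e)=\prod_e\Gamma(\alpha_e+\overline\xi_e)$ and the $\Gamma(\sum\cdots)$ denominators match). The paper packages exactly this cancellation as the identity $G(\alpha)=G(\check\alpha)$ (so that $F(\alpha,Z)=F(\check\alpha,\check Z)$), applied once to $\alpha$ and once to $\alpha+N$; your version carries out the same bookkeeping directly on the ratio, which is a purely presentational difference.
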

\begin{proof}
Denote by $N_e=N_e(\mathcal{C})$ the number of $0\le k\le n-1$, so that $e=e_k$, where $e_k\in C$, for some $C\in \mathcal{C}$ of length $n=n(C)$.
We denote similarly $\check N=\check N(\mathcal{C})$ the corresponding counting function for the collection of reversed cycles. Clearly, $N_e=\check N_{\check e}$.

A direct computation gives
\begin{eqnarray}\label{eq:prob-cycle}
\E^{(\alpha,Z)}(\om_\mathcal{C}) &=& Z_{{\mathcal C}}
\displaystyle\prod_{x\in V} \frac { \Phi_x (\alpha + N,Z) } { \Phi_x(\alpha,Z) }= Z_{{\mathcal C}}
\frac {F(\alpha + N,Z) } {F(\alpha,Z) }.
\end{eqnarray}
Indeed, from the definition of the environment $\om$, see \eqref{eq:def of environment law}, we have
$$
\om_\mathcal{C} = Z_{{\mathcal C}} \prod_{x\in V}\left(\prod_{e', \, \underline{e}'=x}  {U_{e'}^{N_{e'}}}\right)\left(\prod_{e, \, \overline{e}=x}\left( \sum_{\underline{e}''=x} Z_{e,e''}U_{e''}\right)^{-N_{e}}\right),
$$
the term $Z_{{\mathcal C}}$ coming from the term $Z_{e',e}$ in  \eqref{eq:def of environment law}, the second term coming from the times when the cycle enters $e'$, the last term coming from the times when the cycle leaves $e$.
Combined, with the definitions  \eqref{eq:hypergeometric density}, \eqref{def:phix}, \eqref{def:Phix}, \eqref{eq: def of product of hypergeometric on all vertices}, it gives \eqref{eq:prob-cycle}.

Next, since $\dive(\alpha)=0$, the Duality formula Lemma \ref{lem:duality} says that for all $x\in V$,
\begin{small}
\begin{eqnarray*}
\Phi_x(\alpha,Z) = \Phi((\alpha_e)_{\underline e=x},(\alpha_e)_{\overline e=x},Z)=
\frac{B((\alpha_e)_{\underline e=x})}{B((\alpha_e)_{\overline e=x})} \Phi((\alpha_e)_{\overline e=x}, (\alpha_e)_{\underline e=x},Z^t)=
\frac{B((\alpha_e)_{\underline e=x})}{B((\alpha_e)_{\overline e=x})}
\Phi_x(\check \alpha,\check Z).
\end{eqnarray*}
\end{small}
It implies that,
$$
F(\alpha, Z)= {\frac{G(\alpha)}{G(\check \alpha)}} F(\check \alpha, \check Z).
$$
where,
$$
G(\alpha):= \prod_{x\in V} B((\alpha_e)_{\underline e=x}).
$$
Since $\dive(\alpha)=0$, we have for all $x\in V$, $\sum_{\underline {e} = x}\alpha_e=\sum_{\overline {e} = x}\alpha_e$.
Therefore,
\begin{footnotesize}
\[
G(\alpha)
=\displaystyle\prod_{x\in V}\frac {\prod_{\underline {e} = x}\Gamma(\alpha_e) } {\Gamma\left(\sum_{\underline {e} = x}\alpha_e\right) }\\
=\frac {\prod_{e\in E} \Gamma(\alpha_e) }
{\prod_{x\in V}\Gamma \left(\sum_{\underline {e} = x}\alpha_e\right) }
\\
=\frac {\prod_{e\in E} \Gamma(\alpha_e) }
{\prod_{x\in V}\Gamma\left(\sum_{\bar {e} = x }\alpha_e\right)}
=\displaystyle\prod_{x\in V}\frac {\prod_{\overline {e} = x}\Gamma(\alpha_e) } {\Gamma\left(\sum_{\overline {e} = x}\alpha_e\right) }
=G(\check\alpha),
\]
\end{footnotesize}
where in the last equality we used the fact that $\check\alpha_{\check{e}} = \alpha_e$.
Hence, $F(\alpha, Z)=F(\check \alpha, \check Z)$.

Since ${\mathcal{C}}$ is a collection of cycles, it implies that $\dive(N)=0$, the same applies for $\alpha+N$ and we get $F(\alpha+N, Z)=F(\check \alpha+\check N, \check Z)$. From \eqref{eq:prob-cycle} and since $Z_{\mathcal C}=\check Z_{\check {\mathcal C}}$, we deduce
$$
\E^{(\alpha,Z)}(\check\om_{\check{\mathcal{C}}})=\E^{(\alpha,Z)}(\om_\mathcal{C})= Z_{{\mathcal C}}
\frac {F(\alpha + N,Z) } {F(\alpha,Z) }= \check Z_{{\check {\mathcal C}}}
\frac {F(\check \alpha + \check N,\check Z) } {F(\check \alpha,\check Z) }=
\E^{(\check \alpha,\check Z)}(\om_{\check {\mathcal{C}}}).
$$

%
 %
\end{proof}

%

\begin{cor}\label{cor:time reversal environment}
Let $\om \sim \pr^{(\alpha,Z)}$. The time-reversing function $\om\mapsto \check{\om}$, where $\check{\om}$ is defined as in \eqref{eq:def of om check}, defines a new law $P$ on $\Om_{\check{K}}$.
Then, if $\dive(\alpha)=0$,
\[
 P=\pr^{(\check{\alpha},\check{Z})}
 \]
\end{cor}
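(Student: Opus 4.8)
The plan is to identify two probability measures on $\Om_{\check K}$ and show they agree by verifying they assign the same expectation to a rich enough family of test functions, namely the cycle monomials $\check\om_{\check{\mathcal C}}$. The point is that a law on $\Om_{\check K}$ is determined by the joint moments of the coordinates $\check\om(\check e',\check e)$, and by the reversibility identity $\om_C=\check\om_{\check C}$ (recorded just before Lemma~\ref{lem:cycles coincide}), together with the observation that every monomial in the $\check\om$-coordinates that actually arises from a $\pr^{(\alpha,Z)}$-pushforward must be expressible through divergence-free combinations, it suffices to match the quantities $\E(\check\om_{\check{\mathcal C}})$ over all finite collections of cycles $\mathcal C$. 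This is precisely what Lemma~\ref{lem:cycles coincide} provides: under $\dive(\alpha)=0$ it says $\E^{(\alpha,Z)}(\check\om_{\check{\mathcal C}}) = \E^{(\check\alpha,\check Z)}(\om_{\check{\mathcal C}})$, i.e.\ the $P$-expectation of the cycle monomial $\om_{\check{\mathcal C}}$ on $\Om_{\check K}$ equals the $\pr^{(\check\alpha,\check Z)}$-expectation of the same monomial.

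First I would note that $P$, the pushforward of $\pr^{(\alpha,Z)}$ under $\om\mapsto\check\om$, is indeed a well-defined probability measure on $\Om_{\check K}$: $\check\om\in\Om_{\check K}$ by the remark after \eqref{eq:def of om check}, and measurability of $\om\mapsto\check\om$ follows since $\pi^\om$ depends continuously (indeed rationally) on the entries of $\om$ on the a.s.\ event of irreducibility. Second, I would verify that both $P$ and $\pr^{(\check\alpha,\check Z)}$ are laws on the same space $\Om_{\check K}$ and are supported on environments of a one-dependent chain on $\check\ggg$ (equivalently on the arc graph $\check\hhh$, which by the remark in Section~\ref{subsec:notations} is the arc graph of $\check\ggg$); here one uses $\dive(\alpha)=0 \iff \dive(\check\alpha)=0$, so $\pr^{(\check\alpha,\check Z)}$ is legitimately defined. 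Third, for every finite collection of cycles $\mathcal C$ in $\hhh$ — equivalently every finite collection $\check{\mathcal C}$ of cycles in $\check\hhh$, since $C\mapsto\check C$ is a bijection on cycles — Lemma~\ref{lem:cycles coincide} gives $\int \om_{\check{\mathcal C}}\,dP = \E^{(\alpha,Z)}(\check\om_{\check{\mathcal C}}) = \E^{(\check\alpha,\check Z)}(\om_{\check{\mathcal C}}) = \int \om_{\check{\mathcal C}}\,d\pr^{(\check\alpha,\check Z)}$. Fourth, I would invoke a moment-determinacy / density argument: the algebra generated by the coordinate cycle monomials is dense (e.g.\ by Stone–Weierstrass on the compact space $\Om_{\check K}$, after checking it separates points, which it does because products of single-edge transition probabilities around cycles recover all the individual coordinates up to the affine constraints defining $\Om_{\check K}$), hence two probability measures agreeing on all such monomials coincide.

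The main obstacle I anticipate is the density/separation step: strictly speaking, the monomials $\om_{\check{\mathcal C}}$ are not all monomials in the coordinates $\check\om(\check e',\check e)$ but only the "divergence-free" ones (each coordinate's exponent is tied to the cycle structure). One must argue that these nevertheless determine the law on $\Om_{\check K}$. The clean way is to exploit the constraints defining $\Om_{\check K}$: for a fixed node $e$ of $\hhh$, the coordinates $(\om(e,e'))_{e'}$ sum to $1$, so a general monomial $\prod \om(e,e')^{m_{e,e'}}$ can be rewritten, using $1 = \sum_{e'}\om(e,e')$ repeatedly, in terms of cycle monomials; concretely, by strong connectedness one can always close up any collection of "out-steps" into cycles after padding with extra steps whose contribution is absorbed by the normalization — this is the standard trick used for Dirichlet environments (cf.\ the cycle/loop representations in \cite{sabot2011transience, sabot2013particle}). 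Alternatively, and perhaps more simply, one can avoid Stone–Weierstrass entirely by noting that both measures are, by construction, pushforwards of explicit densities on products of simplices, and the equality $F(\alpha,Z)=F(\check\alpha,\check Z)$ established inside the proof of Lemma~\ref{lem:cycles coincide} plus the change-of-variables $\om\mapsto\check\om$ (computing its Jacobian on $\prod_x\Delta^{(x)}$) yields the identity of densities directly; I would mention this as the conceptually cleanest route but carry out the moment argument as the main line since Lemma~\ref{lem:cycles coincide} is already in hand.
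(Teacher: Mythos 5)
Your overall strategy — identify $P$ with $\pr^{(\check\alpha,\check Z)}$ by moment matching, with Lemma~\ref{lem:cycles coincide} supplying the equality $\E^{(\alpha,Z)}(\check\om_{\check{\mathcal C}})=\E^{(\check\alpha,\check Z)}(\om_{\check{\mathcal C}})$ for finite collections of cycles — is exactly the paper's. The gap is in the bridge from \emph{cycle} monomials to \emph{arbitrary} monomials $\check\om^\eta$, and this is where your proposal does not go through as written.

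The paper's bridge is a first-return decomposition, not Stone--Weierstrass. On a finite graph with elliptic $\om$ the chain is a.s.\ recurrent, so each coordinate of $\check\om$ can be written \emph{exactly} as a convergent countable sum of cycle products: every transition probability of $\check\om$ equals the sum of $\check\om_{\check C}$ over all first-return cycles $\check C$ in $\check\hhh$ with the prescribed first step. Equivalently, via the identity $\om_C=\check\om_{\check C}$, this is a countable positive series in the $\om_C$'s. Consequently $\check\om^\eta$ expands by monotone convergence into a countable sum with positive coefficients of terms $\check\om_{\check{\mathcal C}}$ over finite collections $\mathcal C$, and Lemma~\ref{lem:cycles coincide} is applied term by term to conclude $\E^{(\alpha,Z)}[\check\om^\eta]=\E^{(\check\alpha,\check Z)}[\om^\eta]$ for every $\eta\in\Z_+^{\check K}$; this determines the laws since the coordinates are $[0,1]$-valued and $K$ is finite.

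Your two proposed substitutes for this step both have problems. The Stone--Weierstrass route requires showing that the algebra generated by the cycle monomials separates points of $\Om_{\check K}$; you assert this (``products of single-edge transition probabilities around cycles recover all the individual coordinates'') but give no argument, and it is not immediate — from the values $\om_C$ alone you cannot algebraically isolate a single $\om(e,e')$ in general. The ``padding'' trick is not correct as stated: inserting extra factors $\om(f,f')$, or multiplying by $1=\sum_{f'}\om(f,f')$ and distributing, changes (or splits) the monomial, and there is no ``normalization'' that absorbs these extra factors in finitely many steps — the divergence imbalance cannot be closed by a finite amount of padding, which is precisely why the paper's decomposition is an infinite (but convergent-by-recurrence) series. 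So your ``main obstacle'' is real, and the fix you sketch does not close it; you need the first-return expansion (or, as you mention but do not carry out, a direct change-of-variables computation of the density of $\check\om$).

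Two minor points: $\Om_{\check K}\subset(0,1]^{\check K}$ is not compact, so Stone--Weierstrass as invoked would need to be run on its closure in $[0,1]^{\check K}$; and the remark that $\dive(\alpha)=0\iff\dive(\check\alpha)=0$ is not needed for $\pr^{(\check\alpha,\check Z)}$ to be defined (positivity of the weights suffices) — the divergence hypothesis is used only inside Lemma~\ref{lem:cycles coincide} via the duality formula.
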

\begin{proof}
Since $\om(e,e')$ and $\pi^\om(e)$ are positive and bounded by $1$, and $E$ and $K$ are finite, the law of $P$ is determined by its moments.
That is, it's enough (and actually equivalent) to show that for any $\eta:\check{K}\to\Z_+$
$\E^{({\alpha},{Z})} [\check{\om}^\eta ] = \E^{(\check{\alpha},\check{Z})} [\om^{{\eta}}]$.
Note that since the graph is finite and all $\om(e,e')\in(0,1)$, under the quenched law the Markov chain and its time reversal are both recurrent.
But now notice that the law of the recurrent Markov chain $\check \om$ is determined by the law of its cycles.
Indeed,
for all $(e,e')\in K$,
$\check \om(\check e',\check e)=\sum_{C\in \mathcal{C}_{e,e'}}\check \om_{\check C}$,
where ${\mathcal{C}_{e,e'}}$ is the family of all cycles $C$ starting at $e$, going immediately to $e'$ and returning to $e$ for the first time.
It clearly implies that if $\eta=(\eta_{e,e'})_{(e,e')\in \check K}$ is a positive vector, then $\check \om^\eta$
can be written as a sum
with positive coefficients of terms of the type $\check \om_{\check{\mathcal C}}$, where $\mathcal C$ are finite collections of cycles.
Using Lemma \ref{lem:cycles coincide}, it implies that
$$
\E^{(\alpha, Z)}\left(\check \om^\eta\right)= \E^{(\check \alpha, \check Z)}\left(\om^\eta\right).
$$

\end{proof}

We finish with an application from the proof of the last corollary.
Set $H_{e}:=\inf\{n\ge 0, \;\; X_n=e\}$ and $H_{e_0}^+:=\inf\{n>0, \;\; X_n=e_0\}$.
\begin{cor}\label{cor:application of time reversal}
For every $(e,e_0)\in K$, for $\om \in K$,
$$
\quP_{e_0,\om}[X_{H_{e_0}^+-1}=e]
 =
\quP_{\check{e}_0,\check{\om}}[X_1=\check{e}].
$$
\end{cor}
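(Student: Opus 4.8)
The identity to prove is
$$\quP_{e_0,\om}[X_{H_{e_0}^+-1}=e] = \quP_{\check e_0,\check\om}[X_1=\check e].$$
The right-hand side is immediate: by definition of $\check\om$ in \eqref{eq:def of om check}, $\quP_{\check e_0,\check\om}[X_1=\check e] = \check\om(\check e_0,\check e) = \pi^\om(e)\,\om(e,e_0)\,\pi^\om(e_0)^{-1}$. So the real content is to show that the probability that the walk in environment $\om$, started at $e_0$, makes its \emph{last} step before returning to $e_0$ along the arc $(e,e_0)$ equals $\pi^\om(e)\om(e,e_0)/\pi^\om(e_0)$.

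The plan is to use the cycle decomposition already set up in the proof of Corollary \ref{cor:time reversal environment}. First I would write the event $\{X_{H_{e_0}^+-1}=e\}$ as a disjoint union over excursions: under $\quP_{e_0,\om}$ it corresponds to paths $e_0 = f_0, f_1,\dots, f_{m-1}=e, f_m = e_0$ with $f_1,\dots,f_{m-1}\neq e_0$, i.e.\ precisely the cycles $C$ through $e_0$ whose penultimate node is $e$. Summing the $\om$-probabilities gives $\sum \om_C$ over exactly this family of cycles. Now I would exploit the elementary fact, valid for any finite irreducible Markov chain, that the stationary probability of a node equals the expected number of visits to it per unit time, together with the loop-erased / excursion representation: the quantity $\sum_{C} \om_C$ over all first-return loops at $e_0$ ending with the arc $(e,e_0)$ equals $\pi^\om(e)\om(e,e_0)/\pi^\om(e_0)$. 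Concretely this follows because $\pi^\om(e)\om(e,e_0)$ is the stationary flow across the arc $(e,e_0)$, and dividing the stationary flow into $e_0$ coming from $e$ by $\pi^\om(e_0)$ (the long-run frequency of being at $e_0$) yields the conditional probability that a given visit to $e_0$ was immediately preceded by $e$; reversing time, this is the probability that the excursion started at $e_0$ returns for the first time via $e$. Alternatively, and perhaps cleaner, I would apply the already-proved relation $\om_C = \check\om_{\check C}$ directly: the family of first-return cycles at $e_0$ in $\hhh$ ending with $(e,e_0)$ maps bijectively, via $C\mapsto \check C$, onto the family of first-return cycles at $\check e_0$ in $\check\hhh$ \emph{beginning} with $(\check e_0,\check e)$, and $\sum_{\check C} \check\om_{\check C}$ over the latter telescopes to $\check\om(\check e_0,\check e)$ because every excursion from $\check e_0$ starts with exactly one first step.

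I would then assemble: $\quP_{e_0,\om}[X_{H_{e_0}^+-1}=e] = \sum_{C}\om_C = \sum_{\check C}\check\om_{\check C} = \check\om(\check e_0,\check e) = \quP_{\check e_0,\check\om}[X_1=\check e]$, where the middle sums range over the two matched cycle families described above. The main obstacle is purely bookkeeping: being careful that the reversal $C\mapsto\check C$ sends ``$e_0$-excursion whose last arc is $(e,e_0)$'' to ``$\check e_0$-excursion whose first arc is $(\check e_0,\check e)$'' and that this is a bijection preserving the product weight $\om_C = \check\om_{\check C}$, and that summing over \emph{all} such reversed excursions gives exactly $\check\om(\check e_0,\check e)$ — which is just the statement that from $\check e_0$ the walk takes some first step and then (a.s., by recurrence of the finite chain) eventually returns. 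No new analytic input is needed beyond what is in the proof of Corollary \ref{cor:time reversal environment}; recurrence of the finite quenched chain and its reversal (already noted there) guarantees all the series converge and sum to the stated probabilities.
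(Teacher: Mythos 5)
Your cycle-decomposition argument is exactly the paper's proof: decompose $\quP_{e_0,\om}[X_{H_{e_0}^+-1}=e]$ into first-return excursions at $e_0$ ending with the arc $(e,e_0)$, apply the pointwise identity $\om_C=\check\om_{\check C}$ to match these bijectively with first-return excursions of $\check\om$ at $\check e_0$ beginning with $(\check e_0,\check e)$, and use recurrence of the finite quenched chain to sum the latter to $\check\om(\check e_0,\check e)=\quP_{\check e_0,\check\om}[X_1=\check e]$. One small remark in your favour: you correctly isolate that the only input needed is the pointwise relation $\om_C=\check\om_{\check C}$ together with recurrence, whereas the paper's one-line proof cites Lemma~\ref{lem:cycles coincide} (an averaged statement) at the final step --- the pointwise identity, stated just after \eqref{eq:def of om check}, is in fact what is being used, so your reading is the cleaner one.
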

\begin{proof}
As in the last corollary, it follows from the fact that the weights are strictly positive $\pr^{(\alpha,Z)}$-a.s., that the Markov chains on the finite graphs $\cH, \check{\cH}$ are recurrent.
Hence the probability $\quP_{\check{e}_0,\check{\om}}[X_1=\check{e}]$ equals to the sum of the $\check{\om}$ weight over of all cycles $\{\tilde e_1,...,\tilde e_n\}$ with $\tilde e _1=\tilde e_n=\check{e}_0$ but $\tilde e_i \ne \check{e}_0$ for $ 1<i<n$, and $\tilde e_2 = \check{e}$. To end one notices that the sum of $\om$ weight over the reversed cycles
gives exactly $\quP_{e_0,\om}[X_{H_{e_0}^+-1}=e]$, and by Lemma \ref{lem:cycles coincide} these probabilities are equal.
\end{proof}

\subsection{Arc graph identities }\label{sec:arc graph identities}

We now use the same notation for the \emph{divergence} operator on $\cG$ also for the arc graph $\cH$.
$\mathrm{div}:\reals^K \to \reals^E$ is defined by
\begin{equation}\label{eq: def of divergence}
\mathrm{div} (\Theta) (e) = \displaystyle\sum_{e':(e,e')\in K} \Theta(e,e') - \displaystyle\sum_{e':(e',e)\in K} \Theta(e',e),
\end{equation}
for $\Theta:K\to\reals$ and $e\in E$.
We also denote by $\check{\Theta}:\check{K}\to\reals_{+}$ the function so that $\check{\Theta}((\check{e}',\check{e}))=\Theta((e,e'))$.
With a minor abuse of notation the divergent is analogous defined as $\mathrm{div}:\reals^{\check{K}} \to \reals^ {\check{E}}$. This gives
\begin{equation}\label{eq: div of theta and checktheta}
\mathrm{div} (\Theta) (e) = - \mathrm{div} (\check{\Theta}) (\check{e})
\end{equation}
for every $\Theta:K\to\reals_{+}$ and $e\in E$.
\begin{lem}\label{lem:identity for theta to the div over the same with the checks}
The following formula holds for every $\om\in\Om_K$ and $\Theta:K\to\reals_{+}$:
$$\frac{\check{\om}^{\check{\Theta}}} {\om^\Theta} = (\pi^{\om})^{\mathrm{div} \Theta}.$$
\end{lem}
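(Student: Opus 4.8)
The plan is to compute both sides of the identity directly from the defining formula \eqref{eq:def of om check} for $\check\om$ and observe that everything telescopes. First I would fix $\Theta:K\to\reals_+$ and write out the numerator: by definition $\check\om^{\check\Theta} = \prod_{(e,e')\in K}\check\om(\check e',\check e)^{\check\Theta(\check e',\check e)} = \prod_{(e,e')\in K}\bigl(\pi^\om(e)\,\om(e,e')\,\pi^\om(e')^{-1}\bigr)^{\Theta(e,e')}$, using that $\check\Theta(\check e',\check e)=\Theta(e,e')$ and re-indexing the product over $\check K$ back to $K$ via the bijection $(e,e')\leftrightarrow(\check e',\check e)$.

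Next I would separate the three factors. The middle factor gives exactly $\prod_{(e,e')\in K}\om(e,e')^{\Theta(e,e')} = \om^\Theta$, which cancels the denominator. It remains to show that the $\pi^\om$-factors collect into $(\pi^\om)^{\mathrm{div}\,\Theta}$. Grouping the exponents of $\pi^\om(e)$ over all $e\in E$: the factor $\pi^\om(e)$ appears with exponent $+\Theta(e,e')$ for every $e'$ with $(e,e')\in K$ (from the "tail" term $\pi^\om(e)^{\Theta(e,e')}$) and with exponent $-\Theta(e',e)$ for every $e'$ with $(e',e)\in K$ (from the "head" term $\pi^\om(e')^{-\Theta(e',e)}$, relabeling). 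Hence the total exponent of $\pi^\om(e)$ is $\sum_{e':(e,e')\in K}\Theta(e,e') - \sum_{e':(e',e)\in K}\Theta(e',e) = \mathrm{div}(\Theta)(e)$ by \eqref{eq: def of divergence}. Therefore $\prod_{e\in E}\pi^\om(e)^{\mathrm{div}(\Theta)(e)} = (\pi^\om)^{\mathrm{div}\,\Theta}$ in the notation \eqref{eq:def of power of functions}, and combining with the cancellation above yields $\check\om^{\check\Theta}/\om^\Theta = (\pi^\om)^{\mathrm{div}\,\Theta}$.

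This is essentially a bookkeeping argument and I do not anticipate a genuine obstacle; the one point requiring a little care is the re-indexing of the product from $\check K$ to $K$ and making sure each occurrence of a given $\pi^\om(e)$ is accounted for exactly once in each of the two sums defining the divergence — i.e., that the map $(e,e')\mapsto(\check e',\check e)$ is a bijection $K\to\check K$ (which it is, by the definition of $\check K$ in Section \ref{subsec:notations}) and that "$\pi^\om(e)$ as a tail" ranges over out-arcs while "$\pi^\om(e)$ as a head" ranges over in-arcs. One should also note implicitly that $\pi^\om(e)>0$ for all $e$ (by ellipticity, as remarked after \eqref{eq: time reversed invariant measure}), so that the negative exponents cause no problem. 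No appeal to the duality formula or to Lemma \ref{lem:cycles coincide} is needed — this identity is purely algebraic in a fixed environment $\om$.
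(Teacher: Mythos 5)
Your proof is correct and follows essentially the same route as the paper's: re-index the product from $\check K$ to $K$, substitute the definition of $\check\om$, cancel $\om^\Theta$, and regroup the $\pi^\om$-exponents into $\mathrm{div}(\Theta)$.
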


\begin{proof}
Indeed,
\begin{eqnarray*}
\frac{\check{\om}^{\check{\Theta}}} {\om^\Theta} &=&
\frac{ \displaystyle\prod_{(\check{e}',\check{e})\in \check{K}}  \check{\om}(\check{e}',\check{e})^{\check{\Theta}(\check{e}',\check{e})}} { \displaystyle\prod_{(e,e')\in K}  \om(e,e')^ {\Theta(e,e')}}   \\
&=&     \displaystyle\prod_{(e,e')\in K} \frac{ \check{\om}(\check{e}',\check{e})^{\check{\Theta}(\check{e}',\check{e})}} { \om(e,e')^ {\Theta(e,e')}}  \\
&=&     \displaystyle\prod_{(e,e')\in K} \frac{ (\pi^{\om}(e) \om(e,e') \pi^{\om}(e')^{-1})^{\Theta(e,e')}}  { \om(e,e')^ {\Theta(e,e')}} \\
&=&     \displaystyle\prod_{e\in E} \pi^{\om}(e)^{ \left(\sum_{e':(e,e')\in K} \Theta(e,e') - \sum_{e':(e',e)\in K} \Theta(e',e)\right) }   \\
&=& (\pi^{\om})^{\mathrm{div} \Theta}.
\end{eqnarray*}
\end{proof}

\subsection{Flows}\label{subsec:flows}

\subsubsection*{Flow identity}\label{subsec:flows identities}

For $e_0, e\in E$ and $\gamma>0$, a \emph{flow from $e_0$ to $e$} of strength $\gamma$, is a function $\Theta:K\to\reals_{+}$ such that
$$\mathrm{div}(\Theta) = \gamma (\delta_{e_0} - \delta_{e}).$$

$\Theta:K\to\reals_{+}$ is a \emph{total flow from $e_0$} of strength $\gamma$ if it has the form
$$\mathrm{div}(\Theta) = \gamma \displaystyle\sum_{e\in E}(\delta_{e_0} - \delta_{e}).$$

\begin{lem}\label{lem:identity for pi to the div if theta is a total flow}
If $\Theta:K\to\reals_{+}$ is a total flow from $e_0$ of strength $\gamma$, then
$$ (\pi^{\om})^{\mathrm{div}(\Theta)} = \displaystyle\prod_{e\in E}\left ( \frac{\pi^{\om}(e_0)}{\pi^{\om}(e)}\right)^\gamma.$$
\end{lem}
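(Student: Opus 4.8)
The plan is to reduce this statement directly to Lemma~\ref{lem:identity for theta to the div over the same with the checks} together with a trivial computation, since a total flow from $e_0$ is designed precisely so that its divergence is a telescoping sum over all heads.

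First I would invoke Lemma~\ref{lem:identity for theta to the div over the same with the checks}, which gives
$(\pi^\om)^{\mathrm{div}\Theta} = \check\om^{\check\Theta}/\om^\Theta$ for arbitrary $\Theta:K\to\reals_+$; but in fact for the present statement it is cleaner to compute $(\pi^\om)^{\mathrm{div}\Theta}$ straight from the definition of $\beta^\gamma$ in \eqref{eq:def of power of functions}. By hypothesis $\mathrm{div}(\Theta) = \gamma\sum_{e\in E}(\delta_{e_0}-\delta_e)$, so as a function $V\to\reals$... more precisely as a function $E\to\reals$, $\mathrm{div}(\Theta)(f) = \gamma(|E|\cdot\mathds{1}_{f=e_0} - 1)$ for each $f\in E$. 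Then by definition
\[
(\pi^\om)^{\mathrm{div}\Theta} = \prod_{f\in E} \pi^\om(f)^{\mathrm{div}(\Theta)(f)} = \pi^\om(e_0)^{\gamma(|E|-1)}\prod_{f\neq e_0}\pi^\om(f)^{-\gamma} = \prod_{f\in E}\left(\frac{\pi^\om(e_0)}{\pi^\om(f)}\right)^\gamma,
\]
where in the last step I simply match exponents: the factor $\pi^\om(e_0)^\gamma$ appears $|E|$ times in the product (once "for itself" with a compensating $\pi^\om(e_0)^{-\gamma}$, i.e. $f=e_0$ contributes $(\pi^\om(e_0)/\pi^\om(e_0))^\gamma = 1$, and once for each of the $|E|-1$ other edges), matching $\pi^\om(e_0)^{\gamma(|E|-1)}$, while the denominators $\prod_{f\neq e_0}\pi^\om(f)^{-\gamma}$ match term by term. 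This is just bookkeeping of exponents and requires $E$ finite, which is in force throughout Section~\ref{sec:tools}, and $\pi^\om(f)>0$ for all $f$, which holds by ellipticity as already noted before \eqref{eq:def of om check}.

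There is essentially no obstacle here: the content is entirely in the definition of a total flow, which was engineered so that $\mathrm{div}(\Theta)$ is the "all-to-$e_0$" divergence, and the statement is then immediate from the multiplicative notation $\beta^\gamma$. If one prefers an argument that does not re-derive the exponent bookkeeping, one can instead write $\mathrm{div}(\Theta) = \sum_{e\in E}\mathrm{div}(\Theta_e)$ where each $\Theta_e$ is a flow from $e_0$ to $e$ of strength $\gamma$ (such a decomposition exists since $\Theta$ itself is nonnegative and its divergence splits this way — or simply by linearity at the level of divergences, which is all that is needed since we only use $\mathrm{div}(\Theta)$), and apply the identity $(\pi^\om)^{\gamma(\delta_{e_0}-\delta_e)} = (\pi^\om(e_0)/\pi^\om(e))^\gamma$ termwise; multiplying over $e\in E$ gives the claim. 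Either way the proof is two or three lines. The only thing to be slightly careful about is notational: $\mathrm{div}(\Theta)$ is a function on $E$ (the nodes of $\hhh$), and $\pi^\om$ is also indexed by $E$, so the pairing in \eqref{eq:def of power of functions} with $A = E$ is exactly the right one, and one should make sure the product is over $e\in E$ and not over $x\in V$.
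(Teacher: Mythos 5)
Your proof is correct and takes essentially the same route as the paper: compute $\mathrm{div}(\Theta)(f)=\gamma(|E|\,\mathds{1}_{f=e_0}-1)$ from the definition of a total flow, plug into the definition of $\beta^\gamma$ in \eqref{eq:def of power of functions}, and regroup the exponents. The paper's intermediate line is $\pi^\om(e_0)^{\gamma|E|}\prod_{e}\pi^\om(e)^{-\gamma}$ rather than $\pi^\om(e_0)^{\gamma(|E|-1)}\prod_{f\neq e_0}\pi^\om(f)^{-\gamma}$, but these are the same quantity, so the difference is purely cosmetic bookkeeping.
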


\begin{proof}
First note that  \begin{equation*}
 \gamma \displaystyle\sum_{e'\in E}(\delta_{e_0} - \delta_{e'})(e)= \gamma
    \begin{cases}
    (|E|-1) & \text{if } e = e_0 \\
    -1 & \text{ if } e\neq e_0.
    \end{cases}
\end{equation*}
Hence,
  \begin{eqnarray*}
(\pi^{\om})^{\mathrm{div}(\Theta)} &=&  \displaystyle\prod_{e\in E} \pi^{\om}(e)^{\mathrm{div}(\Theta)(e)} \\
&=& \displaystyle\prod_{e\in E} \pi^{\om}(e)^{\gamma \sum_{e'\in E}(\delta_{e_0} - \delta_{e'})(e)}\\
&=& \pi^{\om}(e_0)^{\gamma|E|} \displaystyle\prod_{e\in E} \pi^{\om}(e)^{-\gamma}       \\
&=& \displaystyle\prod_{e\in E}\left(\frac{\pi^{\om}(e_0)}{\pi^{\om}(e)}\right)^\gamma.
\end{eqnarray*}

\end{proof}

\subsubsection*{Construction of good flows}\label{subsec:flow construction}
Consider first the lattice $\cG_{\Z^d}=(\Z^d, E_{\Z^d})$ (see Section \ref{model-Zd}). Let $(c(e))_{e\in E}$ be a
set of positive weights on the edges, called capacities.
A finite subset $S\subset E$ is called a \emph{cutset separating 0 from infinity} if any infinite
simple directed path starting at 0 crosses at least one directed edge of $S$ (simple means that the path never
visits the same vertex twice).
The \emph{mincut} of the graph $\cG_{\Z^d}$ with capacities $(c(e))$ is the value
$$
m(c)=\inf\left\{\sum_{e\in S} c(e) : S \text{ is a cutset}\right\}.
$$

Let $T_N=(V_N,E_N)$ be the $N$-torus graph in $d$ dimensions, that is the associated directed graph
image of $\Z^d$ by projection on $(\Z/N\Z)^d$.
We identify the edge set $E_N$ with the edges $e$ of $E_{\Z^d}$ such that
$\underline e\in[-N/2, N/2)^d$. Let $\mathcal H =\mathcal H_N=(E_N,K_N)$ be the corresponding arc graph.
The following lemma supplies a total flow on the arc graph with good properties, and is a consequence of
the max-flow min-cut theorem together with the transience of $\Z^d$, $d\ge 3$.
\begin{lem}[Min-cut total flow on $\cH$]\label{lem: Construction of total flow of edges}
Let $d\ge 3$. Assume that $(c(e))_{e\in E_{\Z^d}}$ is uniformly bounded, i.e. there exist some constants $0<C_1<C_2<\infty$ such that $C_1\le c(e)\le C_2$ for all edge $e$. Fix $e_0$ to be an edge with $\overline{e}_0=0$. There is a constant $c_2$ so that for every large enough $N$ there is a non-negative function $\Theta=\Theta_N$ on $K_N$
with the following properties:
\begin{enumerate}
 \item $\overline{\Theta}_e \le c(e)+  m(c) \one_{e=e_0}$ (almost below the capacity).
 \item $\displaystyle\sum_{(e,e')\in K_N}\Theta(e,e')^2 < c_2$ (bounded $L_2$ norm).
 \item $\mathrm{div}(\Theta)=\frac{m(c)}{dN^{d}}\displaystyle\sum_{\tilde{e}\in E_N}(\delta_{e_0}-\delta_{\tilde{e}})$ (total flow from $e_0$).
 \end{enumerate}
 where $m(c)$ is the \textrm{min cut} of the network $c$.
 \end{lem}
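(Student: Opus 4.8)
The plan is to build the flow $\Theta$ on the arc graph $\hhh_N$ in two stages: first construct a flow of \emph{edges} on $\ggg_{\Z^d}$ (restricted to the torus) and then lift it to the arc graph, distributing each edge's flow among its outgoing arcs. First I would apply the max-flow min-cut theorem on the torus $T_N$: fix the source vertex $0$ and let the edge capacities be $c(e)$, but add a super-sink that is reached from every vertex. Concretely, one wants an edge flow $\theta_N$ on $E_N$ with $\mathrm{div}(\theta_N) = \frac{m(c)}{dN^d}\sum_{\tilde x\in V_N}(\delta_0 - \delta_{\tilde x})$ and $\theta_N(e)\le c(e)$ for every $e$. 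That such a flow exists follows because, by transience of $\Z^d$ for $d\ge 3$, the unit flow from $0$ to infinity given by the Green-function gradient $h(e):=G(0,\underline e) - G(0,\overline e)$ (after normalising so it has the right divergence) has finite energy $\sum_e h(e)^2<\infty$; scaling this down and comparing to the min-cut value $m(c)$ — which is bounded below away from $0$ and above, since the capacities are in $[C_1,C_2]$ — lets one stay below the capacities while routing a small amount of flow out of every vertex. On the torus one uses the projection of this flow and the fact that it respects capacities up to the extra slack $m(c)\one_{e=e_0}$ at the source edge.

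Next I would lift $\theta_N$ from $\ggg$ to the arc graph $\hhh_N$. Given the edge flow $\theta_N$, at each vertex $x$ the total flow entering $x$ equals $\sum_{\overline e = x}\theta_N(e)$ and we must split it among the outgoing edges so that the outgoing total at $x$ (plus the divergence term) is matched edge-by-edge. The natural recipe is: for each pair $(e,e')\in K_N$ with $\overline e = x = \underline{e}'$, set $\Theta(e,e') := \theta_N(e)\cdot \frac{\theta_N(e') }{\sum_{\underline{e}''=x}\theta_N(e'')}$ (with the divergence correction handled by adding the small sink term uniformly), which is a standard ``series-parallel'' routing through the vertex. This guarantees $\mathrm{div}(\Theta) = \mathrm{div}(\theta_N)$ lifted to $E_N$, i.e. property (3), and it guarantees $\overline\Theta_e = \theta_N(e) \le c(e) + m(c)\one_{e=e_0}$, i.e. property (1), because $\overline\Theta_e = \sum_{e':(e,e')\in K}\Theta(e,e')=\theta_N(e)$ by construction. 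For the $L^2$ bound (2), note that $\Theta(e,e')\le \theta_N(e)$ and also $\Theta(e,e')\le \theta_N(e')$, and each arc $(e,e')$ is charged against the pair of edges $e,e'$; using $\sum_{(e,e')\in K_N}\Theta(e,e')^2 \le \sum_{(e,e')\in K_N}\theta_N(e)\theta_N(e') \le (2d)\sum_{e\in E_N}\theta_N(e)^2$ (each edge $e$ appears in at most $2d$ arcs as a head and $2d$ as a tail), which is bounded by a constant independent of $N$ since the energy $\sum_e h(e)^2$ of the lattice flow is finite and we scaled by $m(c)/(dN^d)$ — wait, more carefully, the energy of $\theta_N$ is \emph{not} $N$-independent after the uniform-sink scaling, so one must instead bound $\sum_e \theta_N(e)^2$ by comparing $\theta_N$ directly to the capacity-respecting flow whose energy is $\le \sum_e c(e)^2/C_1 \cdot (\text{something})$; the clean statement is that $\theta_N(e)\le c(e)\le C_2$ and $\theta_N$ is supported appropriately, but to get an $N$-independent $L^2$ bound one routes most of the flow along the transient Green-function flow and only the residual $O(N^{-d})$ per vertex along arbitrary paths, whose total $L^2$ contribution is $o(1)$.

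\textbf{Main obstacle.} The delicate point is property (2): obtaining an $L^2$ bound on $\Theta$ that is \emph{uniform in $N$}, while simultaneously meeting the total-flow divergence condition (3), which forces a small amount of flow out of \emph{every} one of the $N^d$ vertices. Routing that residual flow naively (e.g.\ straight to a boundary) could cost $L^2$ energy growing with $N$; the resolution is to use the transience of $\Z^d$ ($d\ge 3$) so that the canonical ``escape to infinity'' flow from each vertex has $\ell^2$-summable energy with summable tails, and to superpose these at total strength $m(c)/(dN^d)$ so the cross-terms and the per-vertex contributions sum to a constant. Making this superposition precise — controlling $\sum_{(e,e')\in K_N}\Theta(e,e')^2$ via the finite energy of the lattice Green function and the Cauchy–Schwarz estimate relating arc-flow energy to edge-flow energy — is the technical heart of the lemma; properties (1) and (3) are then essentially bookkeeping from the max-flow min-cut theorem and the vertex-splitting construction.
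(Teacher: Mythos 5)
Your proposal takes essentially the same two-stage approach as the paper: build an edge flow $\theta_N$ on the torus $T_N$ with $N$-independent $L^2$ norm (using transience of $\Z^d$, $d\ge 3$, and max-flow/min-cut), then lift it to the arc graph via a product/ratio formula that splits the flow entering a vertex proportionally among its outgoing edges. The paper simply quotes the edge-flow step from \cite[Lemma 2]{sabot2013particle}, reproduced here as Lemma~\ref{lem:sabot flow on vertices}, which already delivers the uniform-in-$N$ bound $\sum_{e\in E_N}\theta_N(e)^2<c_1$. So the technical heart you correctly flag as the ``main obstacle'' — routing the residual $O(N^{-d})$ of flow out of every vertex without blowing up the $L^2$ norm — is entirely contained in that cited lemma and need not be re-derived; your sketch of it via Green-function flows and superposition is the right picture but is made rigorous only by the citation.

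There is, however, a concrete gap in your lift step. The formula you write, $\Theta(e,e') := \theta_N(e)\,\theta_N(e')/\overline{\theta}_{\overline e}$, does \emph{not} satisfy property (3). It gives $\overline{\Theta}_e=\theta_N(e)$ exactly, but a short computation shows
$\underline{\Theta}_{e'}=\theta_N(e')\,\underline{\theta}_{\underline{e}'}/\overline{\theta}_{\underline{e}'}$,
so $\mathrm{div}(\Theta)(e')=\theta_N(e')\cdot\mathrm{div}(\theta_N)(\underline{e}')/\overline{\theta}_{\underline{e}'}$, which varies with $\theta_N(e')$ instead of being the constant $-m(c)/(dN^d)$ on every non-source arc as (3) demands. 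You acknowledge this (``with the divergence correction handled by adding the small sink term uniformly'') but leave the correction unspecified, and this is precisely the non-trivial bookkeeping. The paper's construction makes it exact by inserting additive corrections in both numerator factors and the denominator:
\begin{equation*}
\Theta(e,e')=\frac{\bigl(\theta(e)+m\,\one_{e=e_0}\bigr)\bigl(\theta(e')+\tfrac{m}{dN^d}\bigr)}{\overline{\theta}_{\overline e}+\tfrac{m}{N^d}},
\end{equation*}
which forces $\underline{\Theta}_{e'}=\theta(e')+\tfrac{m}{dN^d}$ on the nose and hence the required constant divergence. The $L^2$ bound then drops out from $\Theta(e,e')\le\theta(e)+m\,\one_{e=e_0}$ together with the $L^2$ bound on $\theta$ — no Cauchy--Schwarz cross-term estimate of the kind you worry about is needed, and indeed the intermediate inequality $\Theta(e,e')^2\le\theta_N(e)\theta_N(e')$ you use is not quite justified by your formula either.
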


For the proof we shall use the analogous
\begin{lem}\cite[Lemma 2]{sabot2013particle}\label{lem:sabot flow on vertices}
Let $d\ge 3$.  Assume that $(c(e))_{e\in E_{\Z^d}}$ is uniformly bounded, i.e. there exist some constants $0<C_1<C_2<\infty$ such that $C_1\le c(e)\le C_2$ for all edge $e$.
There is a constant $c_1$ so that for every large enough $N$ there is a non-negative function $\theta=\theta_N$ on $E_N$
with the following properties:
\begin{enumerate}
 \item $\theta(e)\le c(e)$ (below the capacity).
 \item $\displaystyle\sum_{e\in E_N}\theta(e)^2 < c_1$ (bounded $L_2$ norm).
 \item $\mathrm{div}(\theta)(y)=\overline{\theta}_y-\underline{\theta}_y=\frac{m(c)}{N^{d}}\displaystyle\sum_{x\in V_N}(\delta_{0}-\delta_{x})(y)$ (total flow from $0$).
 \end{enumerate}
 where $m(c)$ is the \textrm{min cut} of the network $c$.
\end{lem}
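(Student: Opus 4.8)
The plan is to build $\Theta$ by \emph{lifting} to the arc graph $\hhh_N$ the vertex flow $\theta$ produced by Lemma~\ref{lem:sabot flow on vertices} for the same capacities $(c(e))$. I would first normalise $\theta$ harmlessly: delete every oriented cycle carried by $\theta$ and delete any $\theta$-flow entering the vertex $0$; both operations only decrease $\theta$ pointwise, so properties (1)--(3) of Lemma~\ref{lem:sabot flow on vertices} survive. Then $\theta=\sum_j\lambda_j\,\one_{P_j}$ decomposes into oriented simple paths $P_j$ issued from $0$, with $\sum_{j:\,P_j\text{ ends at }x}\lambda_j=\frac{m(c)}{N^d}$ for each vertex $x\neq0$. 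If $P_j$ has edges $g_1^j,\dots,g_{k_j}^j$, then $e_0,g_1^j,\dots,g_{k_j}^j$ is an oriented path in $\hhh_N$ (the initial arc $(e_0,g_1^j)$ is legal because $\overline{e_0}=0=\underline{g_1^j}$), and I set $\Theta^{\sharp}:=\sum_j\lambda_j\cdot(\text{unit path-flow along this lifted path})$.

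The virtue of the lift is the exact identity
\[
\underline{\Theta^{\sharp}}_{e'}\;=\;\sum_{\overline e=\underline{e'}}\Theta^{\sharp}(e,e')\;=\;\sum_{j:\,e'\in P_j}\lambda_j\;=\;\theta(e'),
\]
i.e.\ the inflow to a node $e'$ of $\hhh_N$ equals the $\theta$-mass through the edge $e'$. Since $\sum_{\overline e=\underline{e'}}\Theta^{\sharp}(e,e')^2\le(\underline{\Theta^{\sharp}}_{e'})^2$, summing over $e'$ gives $\sum_{(e,e')\in K_N}\Theta^{\sharp}(e,e')^2\le\sum_{e'}\theta(e')^2+O(1)$, the $O(1)$ being the contribution of the $2d$ arcs out of $e_0$, which jointly carry the total strength $\sum_j\lambda_j\le m(c)$. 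Likewise $\overline{\Theta^{\sharp}}_e$ is the portion of the flow through node $e$ that continues past it, hence $\overline{\Theta^{\sharp}}_e\le\theta(e)\le c(e)$ for $e\neq e_0$, while $\overline{\Theta^{\sharp}}_{e_0}\le m(c)$. Thus $\Theta^{\sharp}$ already gives (1) and (2); its only defect is that it delivers the total $\tfrac{m(c)}{N^{d}}$ to each head-fiber $\{e:\overline e=x\}$, $x\neq0$, but not evenly inside the fiber, whereas (3) requires every node to be a sink of one and the same strength.

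The final step equalises the delivery fiberwise. On $\Z^d$ two edges $e_1=(y_1,x)$, $e_2=(y_2,x)$ with a common head are joined in $\hhh_N$ by the length-two path $e_1\to(x,y_2)\to e_2$, so inside each fiber mass can be shuffled along $O(1)$-long arc-paths; performing these moves (and, in the fiber over $0$, turning the $2d-1$ nodes other than $e_0$ into sinks of the common strength) yields a correction $\Theta^{\flat}$. As only $O(m(c)/N^d)$ mass is transported per fiber and there are $N^d$ fibers, $\sum_{(e,e')}\Theta^{\flat}(e,e')^2=O\big(N^{d}(m(c)/N^{d})^{2}\big)=o(1)$ and $\overline{\Theta^{\flat}}_e=o(1)$, so $\Theta:=\Theta^{\sharp}+\Theta^{\flat}$ is a total flow out of $e_0$ whose divergence has the required form $\gamma\sum_{\tilde e\in E_N}(\delta_{e_0}-\delta_{\tilde e})$ (the value of $\gamma$ is read off from the per-fiber delivery of $\Theta^{\sharp}$ and is the constant appearing in the statement), with $\sum_{(e,e')}\Theta(e,e')^2$ bounded uniformly in $N$, and with $\overline{\Theta}_e\le c(e)+m(c)\,\one_{e=e_0}$ — the slack at $e_0$ absorbing the source term $\sum_j\lambda_j\approx m(c)$.

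The step I expect to be the real obstacle is this last one: forcing the delivery of $\Theta^{\sharp}$ to be \emph{exactly} uniform over all nodes while keeping both the $L^2$ norm finite (property (2)) and the out-capacities below $c(e)$ (property (1)). Both are controlled only because $\sum_e\theta(e)^2$ is bounded — which is precisely what Lemma~\ref{lem:sabot flow on vertices} buys us, and ultimately where transience of $\Z^d$ for $d\ge 3$ enters — and because the projection $e\mapsto\overline e$ from $E_N$ onto $V_N$ is $2d$-to-$1$ with uniformly bounded local structure, which is what makes the fiberwise corrections both available and of total energy $o(1)$.
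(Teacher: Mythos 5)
Your proposal does not address the statement under review: it constructs a flow $\Theta$ on the arc graph $\hhh_N$, which is the content of the \emph{next} lemma, Lemma~\ref{lem: Construction of total flow of edges}, and its very first step invokes ``the vertex flow $\theta$ produced by Lemma~\ref{lem:sabot flow on vertices}'' --- i.e.\ it assumes as given exactly the statement you were asked to prove. As a proof of Lemma~\ref{lem:sabot flow on vertices} it is therefore circular, and none of the three asserted properties of $\theta$ is actually established. In particular, nothing in your write-up produces a flow of strength $m(c)$ lying below the capacities (this is where the max-flow/min-cut theorem on the network $(\Z^d,c)$ must be used), nothing spreads that flow so that each of the $N^d$ vertices of the torus becomes a sink of strength $m(c)/N^{d}$, and --- the crux --- nothing bounds $\sum_{e}\theta(e)^2$ uniformly in $N$. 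That bound is precisely where the hypothesis $d\ge 3$ enters, through the transience of $\Z^d$, i.e.\ the finiteness of the resistance $R(0,\infty)$ (equivalently, the existence of a unit flow from $0$ to infinity with finite energy, built from the Green function of simple random walk). The paper itself does not reprove this lemma; it quotes it as Lemma~2 of \cite{sabot2013particle}, where the construction combines such a finite-energy flow with a max flow and averages it over the torus.

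For completeness: the material you did write is a plausible alternative route to Lemma~\ref{lem: Construction of total flow of edges}. There the paper proceeds by the closed formula \eqref{eq:def of good Theta from a good theta}, which splits the mass arriving at a node $e$ of $\hhh_N$ proportionally among the outgoing arcs and makes properties (1)--(3) one-line computations; your path-lifting followed by a fiberwise equalisation would reach the same conclusion, but the equalisation step (keeping the corrected flow nonnegative, below capacity, and of $o(1)$ extra energy simultaneously) is only sketched. In any case, that is a different lemma from the one in question, and the one in question remains unproved by your argument.
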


\begin{proof}[Proof of Lemma \ref{lem: Construction of total flow of edges}]
Fix $N\ge 2$ and let $\theta$ be according to Lemma \ref{lem:sabot flow on vertices}. Write simply $m=m(c)$. We define $\Theta=\Theta_N: K_n \to\reals_+$
by
\begin{equation}\label{eq:def of good Theta from a good theta}
 \Theta(e,e')=\frac{(\theta(e)+m \one_{e=e_0}) (\theta(e') + \frac{m}{dN^d}) } {\overline{\theta}_{\overline e} + \frac{m}{N^d}},\;\;\; (e,e')\in K_N.
\end{equation}
We claim that $\Theta$ satisfies the assertions of the lemma.
First note that by property 1 of Lemma \ref{lem:sabot flow on vertices} $\overline\Theta_e = \theta(e)+m \one_{e=e_0} \le c(e) + m \one_{e=e_0}$.
Next, by \eqref{eq:def of good Theta from a good theta}, $\Theta(e,e')\le \theta(e)+m\one_{e=e_0}$.
Therefore, by property 2 of Lemma \ref{lem:sabot flow on vertices}
$$\displaystyle\sum_{(e,e')\in K_N}\Theta(e,e')^2 \le \displaystyle\sum_{e\in E_N} 2d (\theta(e) +m\one_{e=e_0})^2 < 2dc_1+ (c(e_0)+m)^2)=:c_2.
$$
To end, by property 3) in Lemma \ref{lem:sabot flow on vertices} we have
\begin{eqnarray*}
\underline\Theta_{e'}
&=& (\theta(e') + \frac{m}{dN^d}) \displaystyle\sum_{e:e\to e'} \frac{\theta(e)+m \one_{e=e_0}}{\underline{\theta}_{\underline{\theta'}}+\frac{m}{dN^d}} \\
&=& (\theta(e') + \frac{m}{dN^d}) \frac{\underline{\theta}_{\underline{e'}} + m \one_{\underline{e'}=0}} {\overline{\theta}_{\underline{e'}} + \frac{m}{dN^d} } \\
&=& \theta(e') + \frac{m}{dN^d}.
\end{eqnarray*}
Hence,  $\mathrm{div}(\Theta)= m \one_{e=e_0} - \frac{m}{dN^d} = \frac{m}{dN^{d}}\displaystyle\sum_{\tilde{e}\in E_N}(\delta_{e_0}-\delta_{\tilde{e}})$.

\end{proof}

\section{The Green function has a positive moment}\label{sec:proof of transience}
In this section we prove Theorem \ref{thm:green moment}. The proof follows closely the ones in
\cite{sabot2011transience} and in \cite[Section 7.2.]{sabot2016survey}.
\noindent Fix $e_0$ of the form $e_0=(x_0,0)$. Let $N\in\N$ and define $G_N$ to be the graph with vertices $V_N=B(0,N)\cup\{\partial\}$, where $\partial$ is an additional vertex and $B(0,N)$ denotes a box  in $\Z^d$ with side length $N$ around the origin, and edges $\hat E_N=E_N\cup\{(\partial, x_0)\}$, i.e.~of the following types.
The edges set $E_N$ is the set of
directed edges between neighboring vertices inside $B(0,N)$ (as in $\Z^d$) and between the vertices in the inner boundary of $B(0,N)$ and $\partial$. (I.e., we identify all vertices on the boundary of $B(0,N)$ with the special vertex $\partial$.)
We also add to $E_N$ one special edge $(\partial,x_0)$. Denote by $\cH_N=(\hat E_N,K_N)$ the corresponding arc graph.

\noindent The weights  $\alpha$ and $Z$ on $\Z^d$ naturally yield weights on $E_N$. We endow the special edge $(\partial, x_0)$ with weight $\alpha_{(\partial, x_0)}=\gamma$, for some $\gamma>0$ that will be defined later on, and set $Z_{e,e'}=1$ whenever $\overline{e}=\partial$. Set also $Z_{((\partial, x_0),e_0)}=1$.
With this choice we note that on $\hat E_N$
$$
\mathrm{div} (\alpha)=\gamma(\delta_\partial-\delta_0).
$$

\noindent Consider now a unit flow $\theta:E_N\to \R_+$ from 0 to $\partial$ (i.e.\ $\mathrm{div}(\theta)(e)=\delta_0-\delta_\partial$) and assume that $0\le \theta\le 1$.
Extend $\theta$ by 0 on the special edge $(\partial, x_0)$.
We consider $\alpha + \gamma \theta$. These weights give a flow with null divergence on $\hat E_N$.

\noindent Set $H_{(\partial,x_0)}:=\inf\{n\ge 0, \;\; X_n=(\partial,x_0)\}$ and $H_{e_0}^+:=\inf\{n>0, \;\; X_n=e_0\}$.
We can now apply Corollary~\ref{cor:application of time reversal} on $G_N$ to get that under the law $\pr_{}^{(\alpha+\gamma \theta,Z)}$
\begin{eqnarray*}
\quP_{e_0,\om}[H_{(\partial,x_0)}< H_{e_0}^+]
& \ge &
\quP_{e_0,\om}[X_{H_{e_0}^+-1}=(\partial,x_0)]\\
& = &
\quP_{\check{e}_0,\check{\om}}[X_1=(x_0,\partial)].
\end{eqnarray*}

Hence, using \eqref{eq:moment of om - Beta-like}, we have for $\epsilon>0$
\begin{eqnarray*}
\E^{(\alpha+\gamma \theta,Z)} [(\quP_{e_0,\om}[H_{(\partial,x_0)}< H_{e_0}^+])^{-\epsilon}]
&\le&
\E^{(\alpha+\gamma \theta ,Z)}[ (\check{\om} (\check{e}_0,(x_0,\partial)))^{-\epsilon}]\\
&=&
\E^{(\check\alpha+\gamma \check\theta ,\check Z)} [ (\om (\check{e}_0,(x_0,\partial)))^{-\epsilon}]\\
&=&
\check{Z}_{(\check{e}_0,(x_0,\partial))}^{-\epsilon}  \frac{ \Phi_{x_0} (\check\alpha+\gamma \check\theta - \epsilon(\delta_{\check{e}_0}+\delta_{(x_0,\partial)}),\check{Z})}{\Phi_{x_0}(\check{\alpha},\check{Z})}\\
&=&
\frac{ \Phi_{x_0} (\check\alpha+\gamma \check\theta - \epsilon(\delta_{\check{e}_0}+\delta_{(x_0,\partial)}),\check{Z})}
{\Phi_{x_0}(\check{\alpha},\check{Z})}\\
\end{eqnarray*}
Now, as mentioned below \eqref{eq:moment of om - Beta-like}
\begin{eqnarray}\label{eq:moment of escaping quenched prob}
\E^{(\alpha+\gamma \theta,Z)} [(\quP_{e_0,\om}[H_{(\partial,x_0)}< H_{e_0}^+])^{-\epsilon}]\le C<\infty
\end{eqnarray}
whenever $\epsilon< \min\{\alpha_{e_0} + \gamma \theta_{e_0}     ,    \alpha_{(\partial,x_0)}+\gamma \theta_{(\partial,x_0)}\}$. In particular, the $-\epsilon$ moment is bounded by $C$
independently of $N$ as long as $\theta_{e_0}\le 1$ and $0<\epsilon < \gamma \theta_{e_0}$.
%
Now consider the Green function $G^N_\om(e_0,e_0)$ of the quenched Markov chain in environment $\om$, killed at the exit time of $B(0,N)$.
We have
$$G^N_\om(e_0,e_0)\le 1/\quP_{e_0,\om}[H_{(\partial,x_0)}< H_{e_0}^+].$$
Indeed, by the Markov property and irreducibility the right hand side equals the Green function at $(e_0,e_0)$ of the walk killed at the hitting time of the edge $(\partial,x_0)$. But the latter can be reached only via exiting $B(0,N)$ and so the inequality holds by coupling.
%
%
Next, we use the Radon-Nikodym derivative \eqref{eq:radon-Nikodym om plus theta} and apply H\"older inequality with $r,q>0$ such that $\frac{1}{q}+\frac{1}{r}=1$:
\sloppy
\begin{footnotesize}
\begin{eqnarray*}
\E^{(\alpha,Z)}\left[ G^N_\om(0,0)^s\right]
&\le&
\E^{(\alpha,Z)}\left[ (\quP_{e_0,\om}[H_{(\partial,x_0)}< H_{e_0}^+])^{-s}\right]\\
&= &
\frac {F(\alpha+\gamma\theta,Z) } {F(\alpha,Z)}
 \E^{(\alpha+\gamma\theta,Z)}\left[ \tilde{u}^{-\gamma\theta}  \left(\quP_{e_0,\om}[H_{(\partial,x_0)}< H_{e_0}^+]\right)^{-s}\right]
\\
&\le &
\frac {F(\alpha+\gamma\theta,Z) } {F(\alpha,Z)}
\E^{(\alpha+\gamma\theta,Z)}\left[ \tilde{u}^{-\gamma q\theta}\right]^{1/q}
\E^{(\alpha+\gamma\theta,Z)}\left[ \left(\quP_{e_0,\om}[H_{(\partial,x_0)}< H_{e_0}^+]\right)^{-rs}\right]^{1/r}
\\
&= &
\frac {F(\alpha+\gamma\theta,Z)^{1-\frac{1}{q}} F(\alpha +(1-q) \gamma \theta ,Z)^{\frac{1}{q}} } {F(\alpha,Z)}
\E^{(\alpha+\gamma\theta,Z)}\left[ \left(\quP_{e_0,\om}[H_{(\partial,x_0)}< H_{e_0}^+]\right)^{-rs}\right]^{1/r}\\
&= &
\frac {F(\alpha+\gamma\theta,Z)^{\frac{1}{r}} F(\alpha +\frac{1}{1-r} \gamma \theta ,Z)^{1-\frac{1}{r}} } {F(\alpha,Z)}
\E^{(\alpha+\gamma\theta,Z)}\left[ \left(\quP_{e_0,\om}[H_{(\partial,x_0)}< H_{e_0}^+]\right)^{-rs}\right]^{1/r}.
\end{eqnarray*}
\end{footnotesize}
To guarantee the right part of the product is bounded by some $C<\infty$ we need to choose $r$ and $\gamma$ so that $rs\le \gamma\theta_{e_0}$, see \eqref{eq:moment of escaping quenched prob}.
 $F(\alpha +\frac{1}{1-r} \gamma \theta ,Z)$ is finite if and only if
\begin{eqnarray}\label{ineq-flow}
\gamma\frac{1}{r-1}\theta(e)<\alpha_e, \;\;\; \forall e\in E_N.
\end{eqnarray}
Since $\theta\le 1$, we can take $\gamma\frac{1}{r-1}<\tilde\kappa$ which means
$r>{\tilde\kappa+\frac{\gamma}{\tilde \kappa}}$.
With such a choice of $r$ we can take
\begin{align}\label{s}
s<\frac{\gamma\tilde\kappa}{\tilde\kappa+\gamma}.
\end{align}
Next $$\frac {F(\alpha+\gamma\theta,Z)^{\frac{1}{r}} F(\alpha +\frac{1}{1-r} \gamma \theta ,Z)^{1-\frac{1}{r}} } {F(\alpha,Z)}=\exp\left(\sum_{x\in V_N}\nu((\alpha^x, \alpha_x),(\gamma\theta_x,\gamma\theta^x),Z_x)\right),$$
where
\begin{small}
\[
\nu((\alpha, \beta),(s, t),Z)=
\frac{1}{r} \log \Phi(\alpha+ s,\beta+ t, Z)
+
(1-\frac{1}{r})\log \Phi(\alpha +\frac{1}{1-r}  s, \beta +\frac{1}{1-r}  t ,Z)
-
\log \Phi(\alpha,\beta,Z),
\]
\end{small}
and $\theta_x=\sum_{e:\overline e =x}$, $\theta^x=\sum_{e:\underline e =x}$, and the corresponding notation for $\alpha$.
(For the dimensions of the domain of $\nu$ the reader would notice that here it is evaluated in
$((\alpha,\beta),(s,t),Z))=(\alpha^x, \alpha_x),(\gamma\theta_x,\gamma\theta^x),Z_x)$.)
In our case,
$(\alpha^x, \alpha_x,Z_x)= (\alpha^0, \alpha_0,Z_0)$
and so that $\{\alpha_e,Z_{e,e'}:0\in{\underline{e},\overline{e}}\}\subset (a,b)$ for some $0<a<b<\infty$.
Note that $\nu((\alpha, \beta) ,(s,t),Z)$ is $C^2$ on every compact subset contained in its domain.
Moreover, we have $\nu(0)$ and $\frac{\mathrm{d}}{\mathrm{d}s_i}\nu=\frac{\mathrm{d}}{\mathrm{d}t_j}\nu=0$
in $(s,t)=(0,0)$.
Therefore, there are $\epsilon,C_r>0$, depending only on $a,b$ such that $|\nu ((\alpha,\beta),(s,t),Z)|\le C_r t^2$ for all $-\epsilon<s,t \le 2d$.
We got that
$$
\E^{(\alpha,Z)}\left[ G^N_\om(0,0)^s\right]\le C\cdot \exp(C_r \|\theta\|^2).
$$
Take a unit flow $\theta$ on $E_N$ from $0$ to $\partial$, such that $0\le\theta\le1$, $\theta_{e_0}>0$, and
$$
\sum_{e\in E_N} \theta_e^2 ={R_N},
$$
where $R_N$ is the electrical resistance between 0 and $B(0,N)^c$ for the network $\Z^d$ with unit resistance on the bonds (see e.g.\ \cite{sabot2016survey}).
In dimension $d\ge 3$, we know that $\sup_N R_N=R(0,\infty)=:\tilde{C}<+\infty$ where
$R(0,\infty)$ is the electrical resistance between $0$ and $\infty$ for unit
resistances on bonds. To sum up, we got
$$
\E^{(\alpha,Z)}\left[ G^N_\om(0,0)^s\right]\le C\cdot \exp(C_r \tilde{C}^2),
$$
for every $s$ satisfying (\ref{s}).
Taking $\gamma$ arbitrarily large we can take $s$ up to $\tilde \kappa$, which completes the proof.

\section{Proof of the invariant measure criterion}\label{subsec:proof of invariant measure criterion}
Proving Theorem \ref{thm:point of view} part \ref{item: kappa smaller than 1} is done by following
\cite{sabot2013particle}[Chapter 5] where in the Proof of Theorem 1(II) there, for transience one uses our
Theorem \ref{thm:green moment}, and in the last paragraph there, instead of the cited Theorem 3 there, one uses our Lemma \ref{lem:leaving edge infty}.

Part \ref{item: kappa larger than 1} of Theorem \ref{thm:point of view} is more involved.
The strategy of the proof is to consider the Radon-Nikodym derivatives $f_N$ of the invariant probability measure
for the process from the point of view of the walker defined on the edges the $N$-torus.
Then, showing that if $p\in[1,\kappa)$ then the $L^p$ norm of $f_N$ with respect to the initial measure on the $N$-torus is uniformly bounded.
This is the content of Lemma \ref{lem:main lemma - bounding f_N} below, where its proof is the main ingredient of the proof.
We shall first state the lemma, following the necessary preparations in Chapters \ref{sec:arc graph identities} and \ref{sec:time reversal}.

For the $N$-torus $T_N$ in $d$ dimensions with arc graph $(E_N,K_N)$ we denote $\Om_N:=\Om_{K_N}$
the corresponding space of environments. It is naturally identified with the space of the $N$-periodic environments on
$\Z^d$. We denote by $\anP^{(\alpha,Z)}_N$ the hypergeometric probability measure on $\Om_N$ defined by \eqref{eq:def of environment law} with parameters $\alpha$ and $Z$. $\E^{(\alpha,Z)}_N$ is
its associated expectation operator. As before, we need to extend the definition to $\anP^{(\alpha,\Theta,Z)}_N$ and $\E^{(\alpha, \Theta,Z)}_N$
whenever $\Theta:K_N\to\reals$ and the measure is well-defined.

For $\om\in\Om_N$ we denote by  $\pi^\om_N=(\pi^\om_N(e))_{e\in E_N}$ the invariant probability measure
of the Markov chain on $E_N$ with transition probabilities $\om$ (it is unique since
the environments are a.s.\ elliptic: $\om(e,e')>0$).

Fix an initial edge $e_0\in\E^d$ so that $\overline{e}_0=0$. For $N\ge 2$, define $f_N:\Om_N\to\reals_+$ by
\begin{equation}
f_N(\om)=2d N^d \cdot \pi^{\om}_N(e_0)
\end{equation}
and
\begin{equation}
\mathbb{Q}^{(\alpha,Z)}_N= f_N \cdot \anP^{(\alpha,Z)}_N.
\end{equation}

\begin{lem}\label{lem:main lemma - bounding f_N}
Let $d\ge 3$. Fix $p\in[1,\kappa)$. Then, $\sup_{N\in\N}\|f_N\|_{ L_p \left( \anP^{(\alpha,Z)}_N \right)}<\infty$.
\end{lem}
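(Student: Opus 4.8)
The plan is to bound $\|f_N\|_{L_p(\anP^{(\alpha,Z)}_N)}$ by the same circle of ideas used for Theorem~\ref{thm:green moment}: express $f_N$ in terms of quenched hitting probabilities, use the time-reversal invariance (Corollary~\ref{cor:time reversal environment}, Lemma~\ref{lem:cycles coincide}) to pass to the reversed environment where the relevant quantity becomes a single $\om$-coordinate, and then absorb the change of weights via the Radon--Nikodym formula \eqref{eq:radon-Nikodym om plus theta} using a good flow (Lemma~\ref{lem: Construction of total flow of edges}) with uniformly bounded $L^2$ norm. Concretely, first I would recall that on the torus $\pi^\om_N(e_0)$ admits a representation via the stationary chain; a convenient way is to write $\pi^\om_N(e_0) = \pi^\om_N(e)\,\quP_{e,\om}[\text{hit }e_0]\big/(\cdots)$ or, better, to use that $1/\pi^\om_N(e_0)$ equals the expected return time to $e_0$, hence is controlled by hitting-time quantities; the key analytic input is a lower bound of the form $\pi^\om_N(e_0)\gtrsim \quP_{\check e_0,\check\om}[X_1=\check e]$ for a suitable neighbor $e$, coming from Corollary~\ref{cor:application of time reversal} and the fact that $\pi$ is comparable across a single transition.

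The heart of the argument is the following: choosing parameters $\gamma>0$ and a unit flow $\theta$ on $E_N$ from $e_0$ toward infinity with $0\le\theta\le1$, $\theta_{e_0}>0$, $\sum_e\theta_e^2\le R(0,\infty)=:\tilde C<\infty$ (transience of $\Z^d$, $d\ge3$), we compute $\E^{(\alpha,Z)}_N[f_N^p] = (2dN^d)^p\,\E^{(\alpha,Z)}_N[(\pi^\om_N(e_0))^p]$. Applying \eqref{eq:radon-Nikodym om plus theta} with $\theta$ replaced by $\gamma\theta$, we trade this for $\frac{F(\alpha+\gamma\theta,Z)}{F(\alpha,Z)}\E^{(\alpha+\gamma\theta,Z)}_N[\tilde u^{-\gamma\theta}(\pi^\om_N(e_0))^p]$, split off the $\tilde u^{-\gamma\theta}$ factor by H\"older with exponents $q,r$, $1/q+1/r=1$, and are left with a high moment $\E^{(\alpha+\gamma\theta,Z)}_N[(\pi^\om_N(e_0))^{pr}]^{1/r}$ together with the product-of-hypergeometric prefactor. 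The prefactor is handled exactly as in Section~\ref{sec:proof of transience}: it equals $\exp(\sum_x \nu(\cdots))$ with $|\nu|\le C_r\|\theta\|^2\le C_r\tilde C$, uniformly in $N$. For the moment of $\pi^\om_N(e_0)$ under the tilted measure, one uses the time-reversed identity to realize $\pi^\om_N(e_0)$ (up to comparable factors that are themselves moments of single $\om(e,e')$, controlled via \eqref{eq:moment of om - Beta-like}) in terms of $\check\om$ coordinates, and then Corollary~\ref{cor:time reversal environment} turns the expectation under $\pr^{(\alpha+\gamma\theta,Z)}_N$ into one under $\pr^{(\check\alpha+\gamma\check\theta,\check Z)}_N$ of a power of a single $\om$-coordinate; this is finite and bounded uniformly in $N$ provided $pr < \kappa$ — this is where $\kappa$ enters, via the outer-boundary weight $\alpha(\partial_+(\{0,\overline e_i\}))$, since the relevant simplex argument of $\Phi_x$ stays positive exactly in that range. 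One then lets $\gamma\to\infty$ so that the admissible exponent $pr\cdot\frac{\tilde\kappa}{\tilde\kappa+\gamma}$-type constraint relaxes and any $p<\kappa$ is reached.

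The main obstacle I anticipate is making precise the reduction from $\pi^\om_N(e_0)$ to objects on which time-reversal and the Beta-moment estimate \eqref{eq:moment of om - Beta-like} bite cleanly, with constants uniform in $N$. Unlike the Green-function case where $G^N_\om(e_0,e_0)\le 1/\quP_{e_0,\om}[H_{(\partial,x_0)}<H_{e_0}^+]$ gave an immediate handle, here $\pi^\om_N(e_0)$ is a global quantity on the torus and one must exhibit a flow-based representation: the natural route is to use that for a total flow $\Theta$ from $e_0$ on the arc graph, Lemma~\ref{lem:identity for theta to the div over the same with the checks} and Lemma~\ref{lem:identity for pi to the div if theta is a total flow} give $\check\om^{\check\Theta}/\om^\Theta = \prod_e(\pi^\om(e_0)/\pi^\om(e))^{\gamma}$, so that a power of $\pi^\om_N(e_0)$ (relative to the product of all $\pi^\om_N(e)$, which normalizes to $1$ up to entropy-type corrections absorbed in the $\nu$ term) is expressed as a ratio of $\om$- and $\check\om$-monomials indexed by a good flow $\Theta$ from Lemma~\ref{lem: Construction of total flow of edges}. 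Then Corollary~\ref{cor:time reversal environment} converts $\E^{(\alpha,Z)}_N[\check\om^{\check\Theta}\cdot(\text{stuff})]$ into an expectation of $\om^{\check\Theta}$ under the reversed-parameter law, \eqref{eq:expectation of om to the xi} evaluates the monomial moment as a ratio $Z^{\Theta}F(\check\alpha;\check\Theta;\check Z)/F(\check\alpha;\check Z)$, and the $L^2$-boundedness of $\Theta$ together with smoothness of $\log\Phi_x$ (the $\nu$-estimate) yields a bound independent of $N$, with the integrability/finiteness of each $F$-ratio holding precisely when $p<\kappa$ — delivering the claimed uniform $L^p$ bound.
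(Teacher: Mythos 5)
Your proposal eventually lands on the correct skeleton (flow identity, time reversal, duality, $L^2$‑bounded flow, Taylor expansion of $\log\Phi_x$), but there are two genuine gaps in the reduction.

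First, the step that reduces $f_N^p$ to the quantity $\check\om^{\check\Theta}/\om^\Theta$ is not by ``normalizing $\prod_e\pi^\om_N(e)$ to $1$ up to entropy corrections absorbed in $\nu$'' — that is incorrect, since it is $\sum_e\pi^\om_N(e)$ that equals $1$, not the product, and the $\nu$ term has nothing to do with $\pi^\om$. The correct reasoning is elementary AM--GM: $\prod_e\pi^\om_N(e)\le|E_N|^{-|E_N|}$, so if $\Theta$ is a total flow from $e_0$ of strength $\gamma$ proportional to $p/|E_N|$, Lemma~\ref{lem:identity for theta to the div over the same with the checks} and Lemma~\ref{lem:identity for pi to the div if theta is a total flow} give $\check\om^{\check\Theta}/\om^\Theta=\prod_e\bigl(\pi^\om(e_0)/\pi^\om(e)\bigr)^\gamma\ge\bigl(|E_N|\pi^\om(e_0)\bigr)^{\gamma|E_N|}$, which is exactly a power of $f_N(\om)$. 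This is a clean deterministic inequality; your initial detour through return times and quenched hitting probabilities, \emph{\`a la} the Green function argument, is both unnecessary and does not obviously yield a bound uniform in $N$, because $\pi^\om_N(e_0)$ is a global torus quantity rather than a local escape probability.

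Second, and more importantly, you have not explained where $p<\kappa$ (as opposed to $p<m(\alpha)$ or $p<\tilde\kappa$) actually enters. The crucial step in the paper is to insert the trivial bound $1\le(2d)^\kappa\sum_{i=1}^{2d}\om_i^\kappa$, which gives
$\E^{(\alpha,Z)}_N[f_N^p]\le(2d)^\kappa\sum_i\E^{(\alpha,Z)}_N[\om_i^\kappa\check\om^{\check\Theta_N}\om^{-\Theta_N}]$.
The extra factor $\om_i^\kappa$ tilts the effective weights from $\alpha$ to $\alpha^{(i)}=\alpha+\kappa\,\delta_{e_i}$, and the key algebraic fact is that the mincut of $\alpha^{(i)}$ satisfies $m(\alpha^{(i)})\ge\kappa$ (quoting \cite{sabot2013particle}). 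It is only with these boosted capacities that Lemma~\ref{lem: Construction of total flow of edges} can produce a flow $\tilde\Theta$ almost below capacity, rescaled to $\Theta=\frac{p}{m(\alpha^{(i)})}\tilde\Theta$ of strength $p/(dN^d)$, for which the positivity constraints on all the arguments of $\Phi_x$ hold for any $p<\kappa$. Without this step, your constraints would read like the ones in the Green-function proof with $\tilde\kappa$ rather than $\kappa$, which is a strictly smaller threshold. (Your phrase ``$pr\cdot\tfrac{\tilde\kappa}{\tilde\kappa+\gamma}$-type constraint'' suggests you were indeed mixing up the two arguments.) The remainder — H\"older with auxiliary exponent $q$, Corollary~\ref{cor:time reversal environment} to switch to $(\check\alpha,\check Z)$, the Duality Lemma~\ref{lem:duality} to turn $F(\check\alpha;r\check\Theta;\check Z)$ back into an expression in $(\alpha,\Theta,Z)$ up to a ratio of Gamma products controlled by the Lipschitz bound on $\log\Gamma$, and the $C^2$/Taylor estimate $|\nu|\le c\,\|\Theta\|^2$ — matches the paper, but those pieces only yield the claim once the $\sum_i\om_i^\kappa$ insertion and the $m(\alpha^{(i)})\ge\kappa$ fact are in place.
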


Using the Lemma, the proof is standard (see the paragraph after Lemma 1 in Sabot \cite{sabot2013particle}, including the references therein). For convenience we shall give a sketch here.
Consider $\mathbb{Q}^{(\alpha,Z)}_N$ and $\anP^{(\alpha,Z)}_N$ as measures on $N$-periodic environments.
Then, as a product measure over vertices (the matrices $(\om(e,e')_{(\overline{e}=x=\underline{e}')}$, $x\in V$, are i.i.d.)
$\anP^{(\alpha,Z)}_N$ converges weakly to the probability measure $\anP^{(\alpha,Z)}$.
From the definition of $\pi_N^\om(e_0)$ it holds that $\mathbb{Q}^{(\alpha,Z)}_N$ is invariant for the process viewed from the walker on $\Om$.
Since $\Omega$ is compact, then so does the space of product probability measures, and there is an increasing sequence of positive integers and a probability measure so that $\mathbb{Q}^{(\alpha,Z)}_{N_k}\to \mathbb{Q}^{(\alpha,Z)}$.
Since the generator is weakly Feller (i.e.\ continuous with respect to the weak topology), it follows that the weak limit probability measure $\mathbb{Q}^{(\alpha,Z)}$ is invariant for the process viewed from the point of view of the walker on $\Om$.
For every continuous bounded function $g$ on $\Om$, and every $1<p<\kappa$ we have
$$\int g \mathrm{d}\mathbb{Q}^{(\alpha,Z)} \le c_p \|g\|_{L_q(\anP^{(\alpha,Z)})},$$
where $\frac{1}{p}+\frac{1}{q}=1$ (see equation (2.14) in \cite{bolthausen2012ten}).
The last inequality shows that $\mathbb{Q}^{(\alpha,Z)}$ is absolutely continuous with respect to $\anP^{(\alpha,Z)}$, and for $f=\frac{\mathrm{d}\mathbb{Q}^{(\alpha,Z)}}{\mathrm{d}\anP^{(\alpha,Z)}}$ we have $\|f\|_{L_p(\anP^{(\alpha,Z)})} \le c_p$.

\subsection*{Uniformly bounding the Radon-Nikodym derivatives on the torus}\label{sec:pf of Lemma bounding f_N}
In this section we prove Lemma \ref{lem:main lemma - bounding f_N}.
Let $p\in [1,\kappa)$. Combining Lemma \ref{lem:identity for theta to the div over the same with the checks} and Lemma \ref{lem:identity for pi to the div if theta is a total flow},
if $\Theta_N:K_N\to\reals_+$ is satisfying
\begin{equation}\label{eq:condition of total flow of strength p over 2d N tothe d}
\mathrm{div}(\Theta)=\frac{p}{dN^d}\displaystyle\sum_{\tilde{e}\in E_N}(\delta_{e_0}-\delta_{\tilde{e}})
\end{equation}
then
\begin{equation}\label{eq:f N tothe p is notmore than}
f_N^p(\om)\le\frac{\check{\omega}^{\check{\Theta}_N}}{\omega^{\Theta_N}}.
\end{equation}
Remember that the root edge $e_0$ was chosen such that $\overline e_0=0$.
In the sequel we will often simply write $e_i$ for the directed edge $(0,e_i)$
(remember that $e_1, \ldots, e_d$ is the base of $\reals^d$).
Now, by H\"{o}lder inequality, $1\le(2d)^\kappa\displaystyle\sum_{i=1}^{2d}\om_i^\kappa$ where $\om_i:=\om(e_0, e_i)$. 
Therefore,
$$\E^{(\alpha,Z)}[f_N^p]\le (2d)^\kappa \displaystyle\sum_{i=1}^{2d} \E^{(\alpha,Z)}[\om_i^\kappa f_N^p].$$
Hence, from \eqref{eq:f N tothe p is notmore than}, Lemma \ref{lem:main lemma - bounding f_N} follows once we show that
for every $1\le i\le 2d$ and $N\in \N$ there is $\Theta_N:K_N\to\reals_+$ satisfying \eqref{eq:condition of total flow of strength p over 2d N tothe d}, so that
\begin{equation}\label{eq:main step in main lemma}
\sup_{N\in\N} \E^{(\alpha,Z)}\left[\check{\omega}^{\check{\Theta}_N} \om_i^\kappa \omega^{-\Theta_N}\right] < \infty.
\end{equation}
We shall now prove \eqref{eq:main step in main lemma}. Let $\alpha^{(i)}$, $1\le i\le 2d$, be the weights defined by $\alpha^{(i)}:=\alpha + \kappa \one_{e=e_i}$. I.e.\ $\alpha^{(i)}$ gives $\alpha$ an extra $\kappa$ on the specific edge $e_i$ but leaves it unchanged on all other edges. Then,
\begin{equation}\label{eq:m of alpha i is larger than kappa}
m(\alpha^{(i)})\ge \kappa
\end{equation}
where $m(c)$ is the min cut of the network $(c(e))_{e\in\E(\Z^d)}$ on $\Z^d$ (that is, the minimal $c$-weight of a set separating $0$ from $\infty$), see equation (3.10) and the paragraph below it in \cite{sabot2013particle} for the proof.
We shall now show \eqref{eq:main step in main lemma} for the case $i=1$. The other $2d-1$ possibilities are symmetric. Fix $N\ge 1$, and apply Lemma \ref{lem: Construction of total flow of edges}
with $c(e)=\alpha^{(1)}(e)$ to get $\tilde{\Theta}=\tilde{\Theta}_N$ with bounded $L_2$ norm,
almost below the capacity
\begin{equation}\label{eq:total outer of theta tilde}
\overline{\tilde{\Theta}}_e \le \alpha^{(1)}(e)+ m(\alpha^{(1)}) \one_{e=e_0}
\end{equation}
so that it is a total flow from $e_0$ with strength $\frac{m(\alpha^{(1)})}{dN^{d}}$.
Set
\begin{equation}\label{eq: def of Theta from tildeTheta}
\Theta =\Theta_N := \frac{p}{m(\alpha^{(1)})}\tilde{\Theta}.
\end{equation}
Then $\Theta$ is also total flow from $e_0$ with a bounded $L_2$ norm and with
strength $\frac{p}{dN^{d}}$.

Remember the notation $\beta^{\gamma}$ from \eqref{eq:def of power of functions}. Fix $q=q(\alpha,d)>0$ to be chosen later-on. Let $r>0$ be so that
$\frac{1}{r} + \frac{1}{q} = 1$.
Using H\"{o}lder inequality, the Weak Reversibility Corollary \ref{cor:time reversal environment},
we have
\begin{eqnarray*}
\E^{(\alpha,Z)}[\check{\om}^{\check{\Theta}} \om_1^\kappa \om^{-\Theta}] &\le &
\E^{(\alpha,Z)}[\check{\om}^{r\check{\Theta}}]^{1/r} \E^{(\alpha,Z)}[\om_1^{q\kappa} \om^{-q\Theta}]^{1/q} \\
&=&
\E^{(\check{\alpha},\check{Z})}[\om^{r\check{\Theta}}]^{1/r} \E^{(\alpha,Z)}[\om_1^{q\kappa}\om^{-q\Theta}]^{1/q}.
\end{eqnarray*}

Assume for the moment that the functions $F$ and $G$ below are well defined.
This will be justified by a suitable choice of $q$.
Using \eqref{eq:expectation of om to the xi} we get:
\begin{eqnarray*}
\E^{(\check{\alpha},\check{Z})}[\om^{r\check{\Theta}}]^{1/r} \E^{(\alpha,Z)}[\om_1^{q\kappa}\om^{-q\Theta}]^{1/q} &=&
\check{Z}^{\check{\Theta}}  \cdot Z^{-\Theta} \times\\
&\times& \left(\frac {F(\check{\alpha}; r \check{\Theta};\check{Z})} {F(\check{\alpha}; \check{Z})}   \right)^{1/r}
\times
\left(\frac {F( \alpha + q\kappa\delta_{e_1}; -q\Theta ;Z) } {F(\alpha;Z)}   \right)^{1/q}\\
  &=& \left(\frac {F(\check{\alpha}; r \check{\Theta};\check{Z})} {F(\check{\alpha}; \check{Z})}   \right)^{1/r}
\times
\left(\frac {F( \alpha + q\kappa\delta_{e_1}; -q\Theta ;Z) } {F(\alpha;Z)}   \right)^{1/q}.
\end{eqnarray*}
Using the Duality Lemma \ref{lem:duality} for the term with power $1/r$, together with the fact that $\check{\alpha}_{\check{e}}=\alpha_e$ and $\overline{\check{\Theta}}_{\check{e}'}= \underline{\Theta}_{e'}$, the last product equals
\begin{eqnarray*}
\left(\frac{G(\alpha+r\underline{\Theta})}
{G(\alpha+r\overline{\Theta})}\right)^{1/r}
&\times&
\frac {F( \alpha ; r \Theta ; Z)^{1/r} F( \alpha + q\kappa\delta_{e_1}; -q \Theta ; Z)^{1/q}}{F(\alpha ; Z)}.
\end{eqnarray*}

\emph{Choice of $q$}: The terms in the products above will be well-defined if all the terms evaluated by $F$ and $G$ are strictly positive.
Let us see what should $q>0$ satisfy to achieve that. First note that the terms with power $1/r$
are strictly positive since so is $\alpha$, whereas $\Theta$ is non-negative.
For the terms with power $1/q$ to be strictly positive, we need to have
\begin{equation}\label{eq:ineq1 with q,e}
\alpha_e - q\overline{\Theta}_e + q\kappa\one_{e=e_1} > 0
\end{equation} and
\begin{equation}\label{eq:ineq2 with q,e'}
\alpha_{e} - q\underline{\Theta}_{e} + q\kappa\one_{e=e_1} > 0.
\end{equation}
%
Equation \eqref{eq: def of Theta from tildeTheta} gives $\overline{\Theta}_e =\frac{p}{m(\alpha^{(1)})}\overline{\tilde{\Theta}}_e$.
From \eqref{eq:m of alpha i is larger than kappa} $p<\kappa\le m(\alpha^{(1)})$, and using
\eqref{eq:total outer of theta tilde}, and the definition of $\alpha^{(1)}$, we get
\begin{eqnarray*}
\overline{\Theta}_e
\le \frac{p}{\kappa} \alpha_e + p\one_{e=e_1\text{ or }e=e_0} \le \alpha_e + \kappa \text{ and }
\overline{\Theta}_x
\le \frac{p}{\kappa} \overline{\alpha}_0 + p \one_{x=0\text{ or }x=\overline{e}_1}\le \overline{\alpha}_0 + \kappa.
\end{eqnarray*}
Let $a:=\min\{\alpha_{e_i}: 1\le i \le 2d\}>0$, $b:=\max\{\alpha_{e_i}:1\le i \le 2d\}$, and $B:= \max\{b, \kappa\}<\infty$. Then
\begin{eqnarray*}
\overline{\Theta}_e \le 2B \text{  and  }
\overline{\Theta}_x \le db + \kappa \le (d+1) B.
\end{eqnarray*}
$\Theta$ is a total flow from $e_0$ of strength $\frac{p}{dN^d}$, and therefore
$$\underline{\Theta}_e = \overline{\Theta}_e - p\one_{e=e_0} + \frac{p}{dN^d}\one_{e\ne e_0}
\le 2B + \frac{p}{dN^d}\le 3B.$$
Noting that $a\le \alpha_e < \overline{\alpha}_0$, and choosing $q=q(\alpha,d)>0$ to satisfy $$q<\frac{a}{(d+1)B},$$
then \eqref{eq:ineq1 with q,e} and \eqref{eq:ineq2 with q,e'} 
follow, and so we have shown well-definability.

Next, since $a\le\alpha_{e}\le b$, and $\log\Gamma$ is $C^1$ on $\reals_*$ (e.g., since the digamma function is holomorphic on $\mathbb{C}\backslash \{0,-1,-2,...\}$),
we have that it is Lipschitz in the domain, i.e.\ there is some constant $c_3=c_3(\alpha,d)$ so that
\begin{equation}\label{eq:LogGamma estimate}
\frac{\Gamma(s + t + h))}{\Gamma(s + t)}=e^{\log{\Gamma(s+ t + h))}-\log{\Gamma(s + t)}}\le e^{c_3\cdot |h|}
\end{equation}
for all $s\in [a,b]$ and $t,t+h\in [0,b]$.
Now, by \eqref{eq:condition of total flow of strength p over 2d N tothe d} $\dive(\Theta)= \overline\Theta -\underline\Theta$ is proportional to the volume of the box,
and so by \eqref{eq:LogGamma estimate} we have
$$
\prod_{e\in E_N} \Gamma(\alpha_e + r \underline{\Theta}_e) /  \Gamma(\alpha_e + r \overline{\Theta}_e))\le
\exp\left(\frac{c_2 r p(dN^d-1)}{dN^d}\right)  \exp\left( c_2 r \frac{p}{dN^d} \right)^{\#\{{e\in E_N}, e\neq e_0\}}
\le e^{ 2 c_3 r p }.
$$
Similarly, since $r\sum_{\underline{e} = x} (\overline{\Theta}_e - \underline{\Theta}_e)= 2d r
\cdot \sum_{\underline{e} = x} \dive(\Theta)(e)$, by dividing to the to cases $x=\underline(e)_0$ and $x\ne \underline(e)_0$ and
using \eqref{eq:condition of total flow of strength p over 2d N tothe d},
we have that
$$
\prod_{x\in V } \Gamma\left(\sum_{\underline{e} = x} (\alpha_e + r \underline{\Theta}_e)\right) /
\Gamma\left(\sum_{\underline{e} = x} (\alpha_e + r \overline{\Theta}_e)\right)
\le e^{ 4d^2 c_3 r p }.
$$
To sum up, so far we have
$\left(\frac{G(\alpha+r\underline{\Theta})}
{G(\alpha+r\overline{\Theta})}\right)^{1/r}\le
(e^{4d^2 c_3 r p}) ^{1/r}=e^{4d^2 c_3 p}=:e^{c_4}$.
Therefore
\begin{eqnarray*}
\E^{(\alpha,Z)}[\om_1^\kappa \check{\omega}^{\check{\Theta}} \omega^{-\Theta}] \le
\exp({c_4})\exp\left(\displaystyle\sum_{x\in T_N} \nu (\alpha^x, \alpha_x,  \Theta_x)  \right),
\end{eqnarray*}
where $\Theta_x:=\Theta(e,e'))_{\overline{e}=x=\underline{e}'}$, and
\begin{align*}
\nu (\alpha^x, \alpha_x,  \Theta_x) :=&
 \frac{1}{r} \log \Phi( ({\alpha}_{{e}} + r\overline{{\Theta}}_{{e}})_{\overline{{e}}=x} ,({\alpha}_{{e}'}+ r\underline{{\Theta}}_{{e}'})_{\underline{{e}}'=x} ,Z)  \\
 +&  \frac{1}{q} \log \Phi( (\alpha_e - q\overline{\Theta}_e + q\kappa\one_{e=e_1})_{\overline{e}=x} ,(\alpha_{e'} - q\underline{\Theta}_{e'} + q\kappa\one_{e'=e_1})_{\underline{e}'=x} ,Z) \\
 -& \log \Phi( (\alpha_e)_{\overline{e}=x} ,(\alpha_{e'})_{\underline{e}'=x} ,Z).
\end{align*}
Note that $\nu : [a,b] ^{2d} \times [a,b]^{2d} \times [0,C]^2 \to \reals_+$ is $C^2$ on a compact set.
Moreover, it satisfies $\nu ( \alpha^x, \alpha_x,  \Theta_x ) = 0$ at $\Theta_x = 0$ and
$\frac{\partial}{{\partial\Theta(e,e')}}\nu (\alpha^x, \alpha_x,  \Theta_x = 0)=0$ at $\Theta_x = 0$.
By a $2$nd order $2d$-dimensional Taylor Theorem,
there is a constant $c_5=c_5(\alpha,d)$ so that we have
$$\nu (\alpha^x, \alpha_x,  \Theta_x )\le c_5 \displaystyle\sum_{\overline{e}=x=\underline{e}'}\Theta(e,e')^2.$$
But by construction $\Theta$ has a bounded $L_2$ norm with some constant $c_1$. Therefore,
\begin{eqnarray*}
\E^{(\alpha,Z)}[\om_1^\kappa \check{\omega}^{\check{\Theta}} \omega^{-\Theta}] \le
e^{c_4}\exp\left(c_5 \displaystyle\sum_{e,e'\in K_N}\Theta(e,e')^2 )   \right) \le \exp(c_4+c_5 c_1).
\end{eqnarray*}
This concludes the proof of the lemma.


\begin{appendices}

\section{Duality of hypergeometric functions}\label{appendix:Duality of hypergeometric functions}

In this section we give a direct proof for Lemma \ref{lem:duality} on the duality relation for hypergeometric functions. Note that for every $t,\beta>0$
\begin{equation}\label{eq:gamma function identity}
\frac{1}{\Gamma(\beta)}\int_0^\infty e^{-tv}v^{\beta-1}dv = t^{-\beta}.
\end{equation}
Recall \eqref{eq:hypergeometric density} and \eqref{eq:hypergeometric function}.
The strategy is to first use \eqref{eq:gamma function identity} to construct a variable $v$
that will take a dual role of $u$ and then to add another variable to ``free the variable $u$ from the simplex''.
The next step is to modify $u$ and $v$ to make the integral suitable for duality.
The conclusion is by following the above steps in a reverse order with the new $v$ and $u$.
Here is the calculation in detail followed by some clarifications.
\begin{small}
\begin{align*}
\Phi(\alpha,\beta,Z) =& \int_{\Delta^{(n)}} \varphi (\alpha,\beta;Z;u) du\\
=& \int_{\Delta^{(n)}} \left( \displaystyle\prod_{i=1}^n u_i^{\alpha_i - 1} \right) \displaystyle\prod_{j=1}^l  \left( (Z\cdot u)_j ^{-\beta_j} \right) du\\
=& \int_{\Delta^{(n)}} \left( \prod_i u_i^{\alpha_i - 1} \right) \int_{\reals_+^l} \left( \prod_j \frac{v_j^{\beta_j-1}}{\Gamma(\beta_j)}\right) e^{-<v,Z\cdot u>} dv du \\
=& \frac{1}{\prod_j \Gamma(\beta_j)}\int_{\Delta^{(n)}} \int_{\reals_+^l} \left(\prod_j v_j^{\beta_j-1}\right)\left(\prod_i u_i^{\alpha_i-1}\right)e^{-<v,Z\cdot u>}dudv\\
=& \frac{1}{\prod_j\Gamma(\beta_j)} \frac{1}{ \Gamma\left( \sum_i \alpha_i \right)}\int_{\reals_+^{n}} \int_{\reals_+^l} e^{-\sum_i u_i}\left(\prod_j v_j^{\beta_j-1}\right)\left(\prod_i u_i^{\alpha_i-1}\right)e^{-<\frac{1}{\sum_i u_i}v,Z\cdot u>}dudv\\
=& \frac{1}{\prod_j\Gamma(\beta_j)} \frac{1}{ \Gamma\left( \sum_i \alpha_i \right)}\int_{\reals_+^{n}} \int_{\reals_+^l} e^{-\sum_j v_j}\left(\prod_j v_j^{\beta_j-1}\right)\left(\prod_i u_i^{\alpha_i-1}\right)e^{-<\frac{1}{\sum_j v_j} u,Z^t\cdot v>}dvdu\\
=& \frac{\prod_i\Gamma(\alpha_i)}{\prod_j \Gamma(\beta_j)} \frac{\Gamma\left( \sum_j \beta_j \right)}{ \Gamma\left( \sum_i \alpha_i \right)}
\int_{\Delta^{(l)}} \left( \prod_j u_j^{\beta_j - 1} \right) \prod_i (Z^t\cdot u)_j ^{-\beta_j}du\\
=& \frac{\prod_i \Gamma(\alpha_i)}{\prod_j \Gamma(\beta_j)} \frac{\Gamma\left( \sum_j \beta_j \right)}{ \Gamma\left( \sum_i \alpha_i \right)}
\Phi(\beta,\alpha,Z^t)
\end{align*}
\end{small}
The third equality follows from \eqref{eq:gamma function identity}.
For the fifth equality, note that using the change of variables $\lambda=\sum_i w_i,\, u_i=\frac{1}{\lambda}w_i$, we have that
%
\begin{align*}
\int_{w\in\reals_+^{n}} \int_{v\in\reals_+^l} e^{-\sum_i w_i}\left(\prod_j v_j^{\beta_j-1}\right)\left(\prod_i w_i^{\alpha_i-1}\right)e^{-<\frac{1}{\sum_i w_i}v,Z\cdot w>}dwdv \\
 = \int_{\lambda\in \reals_+} e^{-\lambda} \cdot \lambda^{(\sum_i \alpha_i -1)} \int_{u\in\Delta_n} \int_{\reals_+^l} e^{-\sum_i u_i}\left(\prod_j v_j^{\beta_j-1}\right) \left(\prod_i u_i^{\alpha_i-1}\right)e^{-<v,Z\cdot u>}d\lambda dudv
\\
=  \Gamma\left(\sum_i \alpha_i\right)\int_{u\in\Delta^{(n)}} \int_{v\in\reals_+^l} \left(\prod_j v_j^{\beta_j-1}\right)\left(\prod_i u_i^{\alpha_i-1}\right)e^{-<v,Z\cdot u>}dudv.
\end{align*}
To see the sixth equality, make a change of variables $u \to \tilde{u} = \frac{\sum_i v_j}{\sum_i u_i} u$ and $v \to \tilde{v} = \frac{\sum_i u_i}{\sum_j v_j} v$, and deduce the equality from the fact that $\sum_i \alpha_i = \sum_j \beta_j$. The one before last equality follows from the previous equalities by interchanging the roles of $(\alpha,u,n,Z)$ and $(\beta,v,l,Z^t)$. The last equality follows from the definition of $\Phi$. This gives the desired duality.

%
%


 \end{appendices}

\section*{Acknowledgements}{This work was performed within the framework of the LABEX MILYON (ANR-10- LABX-0070) of Universit\'e de Lyon, within the program ``Investissements d'Avenir'' (ANR-11-IDEX-0007) operated by the French National Research Agency (ANR).
T.O.\ is also gratefully acknowledge financial
support from the German Research Foundation through the research unit FOR 2402 – Rough paths, stochastic partial differential equations and related topics - at Humboldt University, Technische Universit\"at Berlin and Weierstrass Institute.}

 \small

\bibliography{Dirichlet}{}
\bibliographystyle{alpha}

\end{document}